\newtheorem{thm}{Theorem}[section]
\newtheorem{pro}[thm]{Proposition}
\newtheorem{cor}[thm]{Corollary}
\newtheorem{lem}[thm]{Lemma}
\newtheorem{conj}[thm]{Conjecture}
\newtheorem{rem}[thm]{Remark}
\newcommand{\mapsfrom}
{\mathrel{\reflectbox{\ensuremath{\mapsto}}}}
\newcommand{\dash}{\mbox{\rm{--}}}
\newcommand{\ua}{\uparrow}
\newcommand{\smid}{\, | \,}
\newcommand{\noin}{\noindent}
\newcommand{\tinyskip}{\vspace{0.05in}}
\newcommand{\dozspace}{\;\;\;\;\;\;\;\;\;\;\;\;}
\newcommand{\FF}{{\mathbb{F}}}
\newcommand{\KK}{{\mathbb{K}}}
\newcommand{\NN}{{\mathbb{N}}}
\newcommand{\Dia}{\mbox{\rm{Dia}}}
\newcommand{\End}{\mbox{\rm{End}}}
\newcommand{\Ind}{\mbox{\rm{Ind}}}
\newcommand{\Iso}{\mbox{\rm{Iso}}}
\newcommand{\Res}{\mbox{\rm{Res}}}
\newcommand{\br}{\mbox{\rm{br}}}
\newcommand{\id}{\mbox{\rm{id}}}
\newcommand{\inc}{\mbox{\rm{inc}}}
\newcommand{\rk}{\mbox{\rm{rk}}}
\newcommand{\tr}{\mbox{\rm{tr}}}
\newcommand{\oo}{\overline}
\renewcommand{\tt}{\widetilde}
\newcommand{\hh}{\widehat}
\newcommand{\cA}{{\cal A}}
\newcommand{\cB}{{\cal B}}
\newcommand{\cC}{{\cal C}}
\newcommand{\cF}{{\cal F}}
\newcommand{\cL}{{\cal L}}
\newcommand{\cM}{{\cal M}}
\newcommand{\cO}{{\cal O}}
\newcommand{\cP}{{\cal P}}
\begin{document}


\title{Pointed fusion systems of blocks}

\author{\large Laurence Barker \\ \mbox{} \\
\normalsize Department of Mathematics \\
\normalsize Bilkent University \\
\normalsize 06800 Bilkent, Ankara \\
\normalsize Turkey}

\maketitle

\small

\begin{abstract}
\noin The pointed fusion system of a block is a
structure consisting of the fusions and relative
multiplicities between the local pointed groups
associated with a maximal Brauer pair. We show
that the pointed fusion system is preserved by
splendid Morita equivalences and part of the
pointed fusion system is preserved by splendid
stable equivalences of Morita type.

\smallskip
\noin 2020 {\it Mathematics Subject Classification.}
Primary: 20C20; Secondary: 20D20.

\smallskip
\noin {\it Keywords:} local pointed group;
Puig category; relative multiplicity; splendid
Morita equivalence; splendid stable equivalence
\end{abstract}

\section{Introduction}
\label{1}

The pointed fusion system of a block,
defined precisely in Section \ref{2}, is
an invariant that can be viewed as a
refinement of the fusion system of a
block. Roughly, the pointed fusion
system is a package of data consisting
of some local pointed groups, the
fusions between the local pointed groups
and the relative multiplicities between the
local pointed groups. The structure of
the pointed fusion system includes the
structure of a category and the structure
of a poset, together with a labelling of
the inclusions by the relative multiplicities.

We mention that a category with at least
some resemblance to the pointed fusion
system was introduced by Puig in \cite{Pui86}
and was named the Puig category in
Th\'{e}venaz \cite[Section 47]{The95}. We
do not know whether the pointed fusion
system, as a category, coincides with the
Puig category. We shall discuss the
relationship between the two categories
in Section \ref{3}.

One of our three main results, Theorem
\ref{3.3}, says that, up to isomorphism, the
pointed fusion system of block can be constructed
from the source algebra of the block. A second
main result, Theorem \ref{5.6}, describes
how a splendid Morita equivalence between
blocks gives rise to an isomorphism of
pointed fusion systems. The third main result,
Theorem \ref{5.7}, describes how a splendid
stable equivalence of Morita type gives rise
to an isomorphism between the stable
parts of the pointed fusion systems.

In view of the first two of those theorems, it
makes sense to view the pointed fusion system
as a local invariant. A consideration of the
$2$-blocks of the non-trivial semidirect
product of $C_3$ over $Q_8$ shows that
the fusion system does not determine
the number of isomorphism classes of
simple modules of a block algebra. As we
shall note in Remark \ref{2.1}, the pointed
fusion system does determine that number.

We shall assume familiarity with block theory,
the theory of Brauer pairs and the theory of
pointed groups, all of which are discussed in
Linckelmann \cite{Lin18}.

After defining, in Section \ref{2}, the
notion of a pointed fusion system and its
stable part, we shall show, in Section \ref{3},
that the pointed fusion system of a block
is determined by a source algebra.

We shall explain, in Section \ref{4}, how
the pointed fusion system of a block yields
a connection between the Weak Donovan
Conjecture and a weak version of Puig's
Conjecture.

In Section \ref{5}, we shall consider two
block algebras of group algebras. We shall
show, in Theorems \ref{5.6} and \ref{5.7},
how two kinds of equivalences between the
two block algebras each give rise to
appropriate isomorphisms between the
structures under consideration.

We shall give some examples. In Section 5,
for blocks with Klein-four defect group,
we shall decribe the three possible
underlying multiposets of the pointed
fusion system. We shall find that there
is only one possibility for the underlying
multiposet of the stable part of the
pointed fusion system.

\section{Pointed fusion systems and their
stable parts}
\label{2}

In the three theorems mentioned in
Section \ref{1}, and also in our pair of
equivalent definitions of the notion of a
pointed fusion system, we shall be making
use of suitable notions of isomorphism.
Formulating those notions of isomorphism
will require some abstraction.

We define a {\bf poset category} to be a
category $\cC$ equipped with:

\tinyskip
\noin $\bullet$ a partial ordering $\leq$
on the set of $\cC$-objects,

\tinyskip
\noin $\bullet$ a family of monomorphisms
${}_P \inc {}_Q : P \leftarrow Q$, called the
{\bf $\cC$-inclusions}, defined for all
$\cC$-objects $P$ and $Q$ satisfying
$P \geq Q$,

\tinyskip
\noin such that the following three
conditions hold:

\tinyskip
\noin {\bf Strictness of
inclusions:} For all $\cC$-objects
$P \geq Q$, the inclusion ${}_P \inc {}_Q$
is an isomorphism if and only if $P = Q$,
furthermore, ${}_P \inc {}_P = \id_P$.

\tinyskip
\noin {\bf Composability of
inclusions:} For all $\cC$-objects
$P \geq Q \geq R$, we have
${}_P \inc {}_Q . {}_Q \inc {}_R =
{}_P \inc {}_R$.

\tinyskip
\noin {\bf Factorization of
morphisms:} For all $\cC$-morphisms
$\phi : P \leftarrow Q$ and $\cC$-objects
$R \leq Q$, there exists a unique
$\cC$-object $\phi(R)$ and a unique
$\cC$-isomorphism $\psi : \phi(R)
\leftarrow R$ such that $\phi . {}_Q \inc
{}_R = {}_P \inc {}_{\phi(R)} . \psi$. We
call $\psi$ the isomorphism with domain
$R$ {\bf restricted} from $\phi$.

\tinyskip
We define a {\bf multiposet} to be
a poset $\cM$ equipped with a function
$m_\cM : \NN \leftarrow \cM \times \cM$,
such that, given $x, y \in \cM$, then
$x \leq y$ if and only if
$m_\cM(x, y) \neq 0$. Thus, a
multiposet is an enrichment of a poset
where each inclusion is labelled with
a positive integer.

Let $\cC$ be a poset category. We
define a {\bf pointed refinement} of
$\cC$ to be a structure with the
following three constituents satisfying
the subsequent four conditions.

\tinyskip
\noin $\bullet$ For each $P \in \cC$,
there is a set $\cP_P$ whose elements
are called the {\bf $\cP$-points} of $P$.
Given $x \in \cP_P$, we write
$P_x = (P, x)$ and we call $P_x$ a
{\bf $\cP$-object}.

\smallskip
\noin $\bullet$ For any $\cC$-morphism
$\phi$ with domain $P$, there is a
bijection $\cP_{\phi(P)} \leftarrow \cP_P$,
written ${}^\phi x \mapsfrom x$, which
depends only on the isomorphism with
domain $P$ restricted from $\phi$. For
$Q \leq P$ and $y \in \cP_Q$, we define
${}^\phi y = {}^\psi y$ where $\psi$ is the
isomorphism with domain $Q$ restricted
from $\phi$. We also write ${}^\phi (Q_y)
= \phi(Q)_{{}^\phi y}$. Note that
${}^\phi (Q_y) = {}^\psi (Q_y)$.

\smallskip
\noin $\bullet$ For any $\cP$-objects
$Q_y$ and $P_x$, there is a natural
number $m_\cP(Q_y, P_x)$ called the
{\bf $\cP$-multiplicity} of $Q_y$ in $P_x$.

\tinyskip
\noin {\bf Bijection composition condition:}
Given $\cC$-morphisms $\psi$ and $\phi$
such that the composite $\psi \phi$ is defined,
letting $P$ be the domain of $\phi$ and
$x \in \cP_P$, then ${}^{\psi \phi} x =
{}^\psi ({}^\phi x)$.

\tinyskip
\noin {\bf Multiposet condition:} The set of
$\cP$-objects, equipped with the function
$m_\cP$, is a multiposet.

\tinyskip
\noin {\bf Refinement condition:} Given
$\cP$-objects $Q_y$ and $P_x$ such that
$Q_y \leq P_x$, then $Q \leq P$,
furthermore, if $Q = P$ then $y = x$

\tinyskip
\noin {\bf Compatibility condition:} Given a
$\cC$-morphism $\phi$ with domain $P$,
a $\cC$-object $Q \leq P$ and points
$x \in \cP_P$ and $y \in \cP_Q$, then
$m({}^\phi (Q_y), {}^\phi (P_x)) =
m(Q_y, P_x)$.

\tinyskip
Let $\cC$ be a poset category and let
$\cP'$ and $\cP$ be pointed refinements
of $\cC$. We define a {\bf $\cC$-identical
isomorphism} $\iota : \cP' \leftarrow \cP$
to be a family of bijections $\iota_P :
\cP'_P \leftarrow \cP_P$ such that $P$
runs over the $\cC$-objects and the
following two conditions hold:

\tinyskip
\noin {\bf Preservation of morphisms:}
For all $\cC$-morphisms $\phi$ with domain
$P$ and $\cP$-points $x$ of $\cP$, we have
$${}^\phi(\iota_P(x)) =
  \iota_{\phi(P)} ({}^\phi x) \; .$$

\noin {\bf Preservation of multiplicities:} For
all $\cC$-objects $Q_y$ and $P_x$, we have
$$m(Q_{\iota_Q(y)}, P_{\iota_P(x)})
  = m(Q_y, P_x) \; .$$

\tinyskip
Throughout the rest of this paper, we let
$\cO$ be a complete local Noetherian ring
with an algebraically closed residue field
$\FF$ of prime characteristic $p$. We let
$G$ be a finite group, we let $b$ a block of
$\cO G$ with defect group $D$ and we let
$B$ a source $D$-algebra of the block
algebra $\cB = \cO G b$. We let $\cF$
be the fusion system on $D$ associated
with $B$. Recall, Linckelmann [Lin18, 8.7.1]
tells us that $\cF$ is determined by the interior
$D$-algebra structure of $B$ and, in fact, by
the interior $\cO D$-$\cO D$-module structure
of $B$.

Given any $p$-subgroup $P$ of $G$, we
write $\br_P : \FF C_G(P) \leftarrow
(\cO G)^P$ for the $P$-relative Brauer
map. Let $\gamma$ be a local point of
$P$ on $\cB$ and let $e$ be a block of
$\FF C_G(P) \br_P(b)$. Consider the
local pointed group $P_\gamma$ and
the Brauer pair $(P, e)$. We write
$P_\gamma \in (P, e)$ when
$\br_P(\gamma) \subset \FF C_G(P) e$.
Given $Q \leq D$, we write $e_Q$ for
the unique block of $\FF C_G(Q)$ such
that $\br_Q(1_B) \in \FF C_G(Q) e_Q$. For
any local pointed group $P_\gamma$ on
$\FF G b$, we say that $P_\gamma$ is
{\bf overshadowed} by $B$ provided
$P \leq D$ and $P_\gamma \in (P, e_P)$.

Given a group $R$ and a monomorphism
$\theta$ with domain $R$, we define
$$\Delta(\theta) =
  \{ (\theta(z), z) : z \in R \}$$
as a subgroup of $\phi(R) {\times} R$.
We define the {\bf pointed fusion
system} $\cL \cP(B)$ of $\cB$ associated
with $B$ to be the pointed refinement of
$\cF$ characterized as follows. Given
$P \leq D$, then the $\cL \cP(B)$-points
of $P$ are those local points $\gamma$
of $P$ on $\cB$ such that $P_\gamma$
is overshadowed by $B$. Thus, the
$\cL \cP(B)$-objects are the pointed
groups on $\cB$ overshadowed by $B$.
Given an $\cL \cP(B)$-object $P_\gamma$
and an $\cF$-morphism $\phi$ with
domain $P$, we define ${}^\phi \gamma$
to be the point of $\phi(P)$ on $\cB$
such that, letting $i \in \gamma$ and
letting $u$ be a unit in $\cB^{\Delta(\phi)}$,
then ${}^u i \in {}^\phi \gamma$. Given
another $\cL \cP(B)$-object $Q_\delta$,
then the $\cL \cP(B)$-multiplicity
$m(Q_\delta, P_\gamma)$ is $0$ unless
$Q \leq P$, in which case
$m(Q_\delta, P_\gamma)$ is the
relative multiplicity of $Q_\delta$ in
$P_\gamma$, we mean, the number of
elements of $\delta$ that appear when
an element of $\gamma$ is expressed
as a sum of mutually orthogonal primitive
idempotents of $A^Q$.

We understand algebras and modules
to be finitely generated over their
coefficient rings. For an algebra $\Lambda$
over $\cO$, we let $\ell(\Lambda)$ denote
the number of isomorphism classes of simple
$\Lambda$-modules.

\begin{rem} \label{2.1}
The number $\ell(\cB)$ is equal to the
number of minimal objects of $\cL \cP(B)$
as a poset.
\end{rem}

\begin{proof}
Since $B$ and $\cB$ are Morita equivalent,
the condition $\tau' \subseteq \tau$
characterizes a bijective correspondence
between the points $\tau'$ on $B$
and the points $\tau$ on the algebra
$C_G(1) e_1 = \cB$. So every pointed group
on $\cB$ having the form $1_\tau$ is
overshadowed by $B$.
\end{proof}

To construct pointed fusion systems
explicitly in particular cases, it is convenient
to work with an isomorphic copy
$\cP(B)$ of $\cL \cP(B)$ defined as
follows. We make use of Brauer
characters of $\FF G$-modules, which we
understand to be $\KK$-valued,
where $\KK$ is a sufficiently large field
of characteristic $0$ whose group of
$p'$-roots of unity is indentified with
the group of torsion units of $\FF$. Some
notation will be needed. Given an algebra
$\Lambda$ over $\cO$ and
$\Lambda$-modules $L$ and $M$ with
$L$ indecomposable, we write $m(L, M)$
to denote the multiplicity of $L$ as a direct
summand of $M$. Given a subalgebra
$\Gamma \leq \Lambda$, we write
${}_\Gamma \Res {}_\Lambda$ to denote
the restriction functor to $\Gamma$-modules
from $\Lambda$-modules.

For $P \leq D$, we define the $\cP(B)$-points
of $P$ to be the irreducible Brauer characters
of $\FF C_G(P) e_P$. Given a $\cP(B)$-point
$\xi$ of $P$ and an $\cF$-morphism
$\phi$ with domain $P$, we define a
$\phi(P)$-point ${}^\phi \xi = {}^g \xi$
where $g \in G$ and $\phi$ is conjugation
by $g$. Let $V(\xi)$ be simple
$\FF C_G(P) e_P$-module with Brauer
character $\xi$. We define the
{\bf diagonal module}
$$\Dia(P_\xi) = \cO G i$$
as an $\FF(G {\times} P)$-module, where
$i$ is a primitive idempotent of $(\cO G)^P$
such that $\br_P(i)$ does not annihilate
$V(\xi)$, and the actions of $G$ and $P$ on
$\cO G i$ are by left and right translation,
respectively. Plainly, $\Dia(P_\xi)$ is
well-defined up to isomorphism, independently
of the choice of $i$. The primitivity of $i$
ensures that $\Dia(P_\xi)$ is indecomposable.
Given $Q \leq P$ and a $\cP(B)$-point $\eta$
of $Q$, we define the $\cP(B)$-multiplicity of
$Q_\eta$ in $P_\xi$ to be
$$m(Q_\eta, P_\xi) = m(\Dia(Q_\eta),
  {}_{G \times Q} \Res {}_{G \times P}
  (\Dia(P_\eta))) \; .$$
Thus, we have defined $\cP(B)$ as a pointed
refinement of $\cF$. The following remark is
clear.

\begin{rem} \label{2.2}
There is an $\cF$-identical isomorphism
$\iota : \cL \cP(B) \leftarrow \cP(B)$ such
that, given a $\cP(B)$-point $P_\xi$, then
$\iota_P(\xi) = \gamma$, where $\gamma$
is the local point of $P$ on $\cB$ such that
$\br_P(\gamma)$ does not annihilate $V(\xi)$.
\end{rem}

Although the pointed fusion system
$\cL \cP(B) \cong \cP(B)$ of $\cB$ does
depend on $B$, the uniqueness of $D$
and $B$ up to $G$-conjugacy implies that,
as a category and as a poset, $\cL \cP(B)$
is well-defined up to isomorphism as an
invariant of $\cB$. To avoid clutter, we
have refrained from writing out the
evident definitions of isomorphism for
poset categories and multiposets. We
have also refrained from writing out the
evident general definition of isomorphism
for pointed refinements of poset categories.
But it is easy to see that, with those
notions of isomorphism understood,
$\cL \cP(B)$ is well-defined up to
isomorphism of pointed refinements
of fusion systems, independently of the
choices of $D$ and $B$.

As an aside, we make some brief
comments about another pointed
refinement of $\cF$ which, again,
can be described, up to
$\cF$-identical isomorphism, in two
ways. Let $\cL \cP^|(B)$ be the
pointed refinement of $\cF$ defined
as follows. For $P \leq D$, we define
the $\cL \cP^|(B)$-points of $P$
to be those points $\alpha$ of $P$
on $\cB$ such that a maximal local
pointed subgroup of $P_\alpha$ is
overshadowed by $B$. Given an
$\cF$-morphism $\phi$ with domain $P$,
we define ${}^\phi \alpha$ as above. The
$\cL \cP^|(B)$-multiplicities are, again, the
usual relative multiplicities between pointed
groups. In analogy with $\cP(B)$, we let
$\cP^|(B)$ be the following pointed
refinement of $\cF$. Employing
some notation and terminology from
\cite{Bar}, we take the $\cP^|(B)$-points
of $P$ to be the substantive generalized
pieces having the form $P {\ua} Q_\eta$
where $Q_\eta$ is a $\cP(B)$-point of
$P$. The action of an $\cF$-morphism
$\phi$ with domain $P$ is given by
${}^\phi(P {\ua} Q_\eta) = ({}^g P)
{\ua} {}^g (Q_\eta)$ where $g$ is as
before. Each $\cP^|(B)$-point $P {\ua}
Q_\eta$ is associated with an
indecomposable $G {\times} P$-bimodule
$\Dia(P {\ua} Q_\eta)$ as defined in
\cite{Bar} and, given a $\cP^|(B)$-point
$S {\ua} T_\tau$ with $S \leq P$, the
$\cP^|(B)$-multiplicity of $S {\ua} T_\tau$
in $P {\ua} Q_\eta$ is defined to be
$$m(S {\ua} T_\tau, P {\ua} Q_\eta) =
  m(\Dia(S {\ua} T_\tau), {}_{G \times S}
  \Res {}_{G \times P}
  (\Dia(P {\ua} Q_\eta))) \; .$$
It is not hard to show that there is an
$\cF$-identical isomorphism
$$\cL \cP^|(B) \cong \cP^|(B) \; .$$
An advantage of $\cL \cP^|(B)$ over
$\cL \cP(B)$, useful when calculating
multiplicties for particular cases, is that,
as explained in \cite{Bar}, the
$\cL \cP^|(B)$-multiplicities
satisfy a matrix relation, and it follows
that all the multiplicties are determined
by those multiplicities $m(S {\ua} T_\tau,
P {\ua} Q_\eta)$ for which $|P : S| = p$.
We omit fuller details because we shall
not be discussing $\cL \cP^|(B)$ any
further in this paper.

We define the stable part of $\cF$,
denoted $\oo{\cF}$, to be the poset
category obtained from $\cF$ by
removing the trivial subgroup of $D$
from the set of objects. Of course, $\cF$
and $\oo{\cF}$ determine each other. The
reason for considering $\oo{\cF}$ is that it
allows us to make the following definition. We
define the {\bf stable part} of $\cL \cP(B)$,
denoted $\oo{\cL \cP}(B)$, to be the pointed
refinement of $\oo{\cF}$ obtained from
$\cL \cP(B)$ by removing the minimal
objects. Thus, the $\oo{\cL \cP}(B)$-objects
are the local pointed groups $P_\gamma$
on $\cB$ such that $1 < P \leq D$ and
$P_\gamma$ is overshadowed by $B$.

\section{Determination by the source algebra}
\label{3}

We continue to work with the block algebra
$\cB = \cO G b$, the source $D$-algebra $B$
and the fusion system $\cF$ introduced in
the previous section. We consider the pointed
fusion system $\cL \cP = \cL \cP (B)$.
Theorem 3.3, below, says that $\cL \cP$
is determined by $B$.

The {\bf Puig category} of $\cB$
associated with $B$, which we write
as $\cL = \cL(B)$, was introduced by
Puig \cite{Pui86}. See also Th\'{e}venaz
\cite[Section 47]{The95}. We define
$\cL$ as follows. The $\cL$-objects are
the local pointed groups on the $D$-algebra
$B$. Given $\cL$-objects $P_\gamma$ and
$Q_\delta$, then the $\cL$-morphisms
$P_\gamma \leftarrow Q_\delta$ are those
group monomorphisms $\phi : P
\leftarrow Q$ such that, choosing
$i \in \gamma$ and $j \in \delta$, then
there exists a unit $u \in B^{\Delta(\phi)}$
satisfying ${}^u j \leq i$, where $\leq$
denotes the usual partial ordering of
idempotents. It is easy to check that
the existence condition is independent
of the choices of $i$ and $j$.

We shall be making use of an
isomorphic copy $\cL'$ of $\cL$ defined
as follows. Let $\lambda$ be the point of
$D$ on $\cB$ such that $1_B \in \lambda$.
In other words, $\lambda$ is the unique
point of $D$ on $B$ such that
$D_\lambda \in (D, e_D)$. The
$\cL'$-objects are the local pointed
subgroups of $D_\lambda$. Given
$\cL'$-objects $P_{\gamma'}$ and
$Q_{\delta'}$, then the $\cL$-morphisms
$P_{\gamma'} \leftarrow Q_{\delta'}$
are the conjugation maps ${}^g y
\mapsfrom y$ where $g \in G$
satisfying $P_{\gamma'} \geq
{}^g (Q_{\delta'})$. A theorem of
Puig \cite[3.6]{Pui86}, also in
Th\'{e}venaz \cite[47.10]{The95},
asserts that there is an isomorphism
of categories $\cL' \leftarrow \cL$
acting as the identity on morphisms
and sending each $\cL$-object
$P_\gamma$ to the $\cL'$-object
$P_{\gamma'}$ such that $\gamma'
\supseteq \gamma$.

Observe that $\cL'$ is a full subcategory
of $\cL \cP$. We do not know whether
$\cL' = \cL \cP$ as categories. We mention
that the conjecture \cite[1.5]{BG22} implies
that $\cL' = \cL \cP$. We do not have an
explicit parameterization of the $\cL'$-objects
or, equivalently, the $\cL$-objects. The next
result, part of Linckelmann \cite[8.7.3]{Lin18},
implies that the inclusion of $\cL'$ in $\cL \cP$
is an equivalence of categories. So we
do have an explicit parameterization of the
isomorphism classes of $\cL$-objects in
terms of irreducible Brauer characters, 
indeed, writing $\cP = \cP(B)$ then,
via the chain of functors $\cP \cong \cL \cP
\hookleftarrow \cL' \cong \cL$, the
isomorphism classes of $\cL$-objects
are in a bijective correspondence with
the $\cP$-objects.

\begin{thm} \label{3.1}
{\rm (Linckelmann.)} Let $P_\gamma$ be a
local pointed group on $\cB$ overshadowed
by $B$. Suppose $P$ is fully $\cF$-centralized.
Then $P_\gamma \leq D_\lambda$, in other
words, $B \cap \gamma$ is a local point of
$P$ on $B$.
\end{thm}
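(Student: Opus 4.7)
The relation $P_\gamma \leq D_\lambda$ unfolds to the existence of some $j \in \gamma$ with $j \leq 1_B$ in the idempotent order, equivalently $j \in 1_B \cB 1_B = B$. Such a $j$ is automatically a primitive local idempotent of $B^P$, so the whole conclusion reduces to showing that $\gamma$ appears with strictly positive multiplicity $m_\gamma(1_B)$ in any decomposition of $1_B$ into mutually orthogonal primitive idempotents of $\cB^P$.

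The plan is to compute this multiplicity after passing through the Brauer homomorphism $\br_P$. Since $\br_P$ induces an algebra isomorphism from $\cB^P / \sum_{Q < P} \tr_Q^P(\cB^Q)$ onto $\FF C_G(P) \br_P(b)$, and since the local points $\gamma$ of $P$ on $\cB$ satisfying $P_\gamma \in (P, e_P)$ correspond bijectively under this isomorphism to conjugacy classes of primitive idempotents of $\FF C_G(P) e_P$, multiplicities are preserved: $m_\gamma(1_B) = m_{\br_P(\gamma)}(\br_P(1_B))$. Writing $V(\gamma)$ for the simple $\FF C_G(P) e_P$-module associated with $\br_P(\gamma)$, this common value equals $\dim_\FF \br_P(1_B) V(\gamma)$, and so the question reduces to the nonvanishing $\br_P(1_B) V(\gamma) \neq 0$.

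The decisive step will be to use the fully $\cF$-centralized hypothesis to identify $\br_P(1_B)$ as a source idempotent of the block $e_P$ of $\FF C_G(P)$. Two standard facts from the theory of Brauer pairs combine here: first, that $C_D(P)$ is a defect group of $e_P$ whenever $P$ is fully $\cF$-centralized; and second, that the primitivity and locality of $1_B$ as an idempotent of $\cB^D$ propagate, via iterated Brauer constructions along the chain $P \leq PC_D(P) \leq D$ and the transitivity relation $\br_{C_D(P)} \circ \br_P = \br_{PC_D(P)}$, to make $\br_P(1_B)$ a primitive idempotent of $(\FF C_G(P) e_P)^{C_D(P)}$ with nonzero image under $\br_{C_D(P)}$. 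Once $\br_P(1_B)$ is recognized as such a source idempotent, the corner algebra $\br_P(1_B) \FF C_G(P) e_P \br_P(1_B) = \br_P(B)$ is a source algebra of $e_P$, hence Morita equivalent to $\FF C_G(P) e_P$. This forces $\br_P(1_B) V \neq 0$ for every simple $\FF C_G(P) e_P$-module $V$, and in particular for $V = V(\gamma)$.

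The main obstacle lies entirely in the identification of $\br_P(1_B)$ as a source idempotent of $e_P$; this is exactly where the fully $\cF$-centralized hypothesis is indispensable, since without it $C_D(P)$ can be a proper subgroup of any defect group of $e_P$, the iterated Brauer construction argument breaks down, and the covering property of $\br_P(1_B)$ on the simple $\FF C_G(P) e_P$-modules can genuinely fail, consistent with the fact that the conclusion of the theorem itself can fail in the absence of the hypothesis.
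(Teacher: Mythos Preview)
The paper does not prove Theorem~\ref{3.1}; it is quoted from Linckelmann \cite[8.7.3]{Lin18} and used as a black box. There is therefore no in-paper argument against which to compare your sketch.

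That said, your outline is the standard route and matches the approach in Linckelmann's book: reduce to $m_\gamma(1_B) > 0$, pass through $\br_P$, and recognise $\br_P(1_B)$ as (or as containing) a source idempotent of $e_P$ so that the associated Morita equivalence forces $\br_P(1_B)\,V(\gamma) \neq 0$. One small refinement: for the final step you do not actually need $\br_P(1_B)$ itself to be primitive in $(\FF C_G(P) e_P)^{C_D(P)}$; it suffices that some primitive summand of it there is local at $C_D(P)$, and this follows directly from the transitivity $\br_{C_D(P)} \circ \br_P = \br_{PC_D(P)}$ together with $\br_{PC_D(P)}(1_B) \neq 0$ (which is immediate from $\br_D(1_B) \neq 0$). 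The full primitivity is true but takes a bit more work, so your sketch is right to flag the identification step as the heart of the matter.
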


In view of the isomorphism between $\cL$
and the full subcategory $\cL'$ of $\cL \cP$,
the latest theorem has the following
corollary.

\begin{cor} \label{3.2}
Let $P_\gamma$ be a local pointed group
on $B$ such that $P$ is fully $\cF$-centralized.
Let $\phi$ be an $\cF$-automorphism of $P$.
Then there exists a local point ${}^\phi \gamma$
of $P$ on $B$ such that $\phi$ is an
$\cL$-isomorphism $P_{{}^\phi \gamma}
\leftarrow P_\gamma$.
\end{cor}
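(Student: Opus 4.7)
The plan is to transfer the problem into $\cL \cP$, use Theorem \ref{3.1} to place the $\phi$-image inside the subcategory $\cL' \subset \cL \cP$, and then translate back to $\cL$ via Puig's isomorphism $\cL \cong \cL'$.

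First I would transport $P_\gamma$ across the chain $\cL \cong \cL' \hookrightarrow \cL \cP$. Under the isomorphism $\cL' \leftarrow \cL$, the object $P_\gamma$ corresponds to $P_{\gamma'}$, where $\gamma' \supseteq \gamma$ is the unique local point of $P$ on $\cB$ with $P_{\gamma'} \leq D_\lambda$. For $i \in \gamma$ one has $i = 1_B i 1_B$, so $\br_P(i) \in \br_P(1_B)\, \FF C_G(P)\, \br_P(1_B) \subseteq \FF C_G(P) e_P$; hence $P_{\gamma'}$ is overshadowed by $B$, and is therefore an object of $\cL \cP$. Choose $g \in G$ whose conjugation action on $P$ equals $\phi$; then $u = g b \in \cB^{\Delta(\phi)}$ is a unit, and by the defining formula for the action of an $\cF$-morphism in $\cL \cP$, the point ${}^\phi \gamma'$ of $P$ on $\cB$ is the one containing ${}^u i = {}^g i$, so ${}^\phi \gamma' = {}^g \gamma'$. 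Overshadowing is $\cF$-invariant, so $P_{{}^\phi \gamma'}$ is again an $\cL \cP$-object.

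The heart of the argument is then to apply Theorem \ref{3.1} to $P_{{}^\phi \gamma'}$. Since $P$ is fully $\cF$-centralized, the theorem yields $P_{{}^\phi \gamma'} \leq D_\lambda$, so that ${}^\phi \gamma := B \cap ({}^\phi \gamma')$ is a local point of $P$ on $B$. Hence $P_{{}^\phi \gamma'}$ is an $\cL'$-object, and the identity $P_{{}^\phi \gamma'} = {}^g P_{\gamma'}$ gives the containment required to exhibit conjugation by $g$, namely $\phi$, as an $\cL'$-morphism $P_{{}^\phi \gamma'} \leftarrow P_{\gamma'}$. The same reasoning applied to $\phi^{-1} \in \Aut_\cF(P)$ supplies an inverse, so $\phi$ is an $\cL'$-isomorphism. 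Because the equivalence $\cL \cong \cL'$ acts as the identity on morphisms, $\phi$ is the required $\cL$-isomorphism $P_{{}^\phi \gamma} \leftarrow P_\gamma$.

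I do not foresee a serious obstacle. The substantive input is Theorem \ref{3.1}, which is designed precisely to deliver a local point on $B$ from one on $\cB$ at a fully $\cF$-centralized subgroup; everything else is checking the overshadowing condition and tracking directions in the two descriptions of the Puig category.
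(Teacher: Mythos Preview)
Your argument is correct and is exactly the approach the paper intends: the paper derives Corollary~\ref{3.2} in one sentence from Theorem~\ref{3.1} together with the isomorphism $\cL \cong \cL'$ and the fullness of $\cL'$ in $\cL \cP$, and your write-up simply unpacks that sentence. The only cosmetic difference is that you verify the $\cL'$-morphism condition directly from ${}^g(P_{\gamma'}) = P_{{}^\phi\gamma'}$ rather than appealing to fullness of $\cL'$ in $\cL\cP$, which is an equally valid (and arguably more self-contained) way to finish.
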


\begin{thm} \label{3.3}
The pointed fusion system $\cL \cP$ is
determined up to $\cF$-identical
isomorphism by the interior
$D$-algebra structure of $B$.
\end{thm}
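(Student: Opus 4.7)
The plan is to reconstruct every ingredient of $\cL\cP = \cL\cP(B)$ --- the set of objects, the action of $\cF$-morphisms on points, and the relative multiplicities --- from the interior $D$-algebra $B$ alone. Since $\cF$ itself is already determined by $B$ (Linckelmann [8.7.1], recalled at the end of Section \ref{2}) and since the Puig category $\cL = \cL(B)$ is manifestly built from the interior $D$-algebra structure of $B$, the strategy is to use the chain of functors $\cL \cong \cL' \hookrightarrow \cL\cP$ --- an equivalence of categories, by the discussion following Theorem \ref{3.1} --- to carry the $B$-intrinsic data from $\cL$ over to $\cL\cP$.

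For the objects and the action of morphisms on points, I would choose, $\cF$-internally (hence from $B$), for each $\cF$-object $P$ a fully $\cF$-centralized $\cF$-conjugate $P^*$ together with an $\cF$-isomorphism $\rho_P : P^* \leftarrow P$. By Theorem \ref{3.1} the $\cL\cP$-points of $P^*$ biject with the local points of $P^*$ on $B$, so transport along $\rho_P$ defines the $\cL\cP$-points of $P$ from $B$. An arbitrary $\cF$-morphism $\phi : \phi(P) \leftarrow P$ factors as $\phi = \rho_{\phi(P)}^{-1} \circ \psi \circ \rho_P$, and the action of $\psi$ on local points of $P^*$ on $B$ is a Puig-categorical datum read off from $B$; this matches the $\cL\cP$-action because a unit $u \in B^{\Delta(\psi)}$ witnessing the $\cL$-morphism conjugates a representative idempotent $i \in \gamma \cap B^{P^*}$ into the $\cL\cP$-point ${}^\psi \gamma$. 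Independence from the choices of $P^*$ and $\rho_P$ reduces to Corollary \ref{3.2}, which identifies the action of $\Aut_\cF(P^*)$ on overshadowed local points of $P^*$ on $\cB$ with its action on local points of $P^*$ on $B$.

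For the multiplicities, given $Q_\delta \leq P_\gamma$ in $\cL\cP$, I would use the Compatibility condition to transport by $\cF$-isomorphisms to the case where $P$, and hence $Q$, is fully $\cF$-centralized. Then Theorem \ref{3.1} places a representative $i \in \gamma$ and all members of $\delta$ inside $B$. Writing $i_0 = 1_B \in \cB^D \leq \cB^Q$ and $B^Q = i_0 \cB^Q i_0$, any primitive idempotent of $\cB^Q$ dominated by $i$ lies in $B^Q$, so a primitive decomposition of $i$ inside $\cB^Q$ coincides with one inside $B^Q$ and the count of members of $\delta$ is the same in either algebra. Thus $m(Q_\delta, P_\gamma)$ can be read off from $B$.

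The main obstacle is the bookkeeping: I must check that these choice-dependent reconstructions paste into a well-defined pointed refinement of $\cF$ that is $\cF$-identically isomorphic to $\cL\cP(B)$ independently of the auxiliary choices of $P^*$ and $\rho_P$. Each required independence statement should reduce to Corollary \ref{3.2} combined with the Compatibility condition; granting this, any isomorphism $B \cong B'$ of interior $D$-algebras induces corresponding isomorphisms of all the reconstructed pieces, yielding the desired $\cF$-identical isomorphism $\cL\cP(B) \cong \cL\cP(B')$.
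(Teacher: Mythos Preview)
Your approach is essentially the paper's own: choose fully $\cF$-centralized representatives with fixed $\cF$-isomorphisms to them, invoke Theorem~\ref{3.1} to identify the $\cL\cP$-points at each representative with local points on $B$, and use Corollary~\ref{3.2} to realise the $\cF$-morphism action via $\cL$-isomorphisms. The paper packages this identically, writing $\tt P$, $\theta_P$ for your $P^*$, $\rho_P^{-1}$, and then simultaneously constructs the refinement $\hh{\cP}$ and the $\cF$-identical isomorphism $\iota:\hh{\cP}\leftarrow\cL\cP$.

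There is one genuine slip in your multiplicity paragraph. The inference ``transport \dots\ to the case where $P$, and hence $Q$, is fully $\cF$-centralized'' is unjustified: a subgroup of a fully $\cF$-centralized subgroup need not be fully $\cF$-centralized, so Theorem~\ref{3.1} does not on its own place representatives of $\delta$ inside $B$. Your corner-algebra observation $B^Q = 1_B\,\cB^Q\,1_B$ is correct and does show that a primitive decomposition of $i\in\gamma\cap B$ in $\cB^Q$ is already one in $B^Q$; what is missing is how to recognise, from $B$ alone, which point of $Q$ on $B$ (if any) corresponds to $\delta$. The paper handles exactly this: after transporting $P$ to $\tt P$, it forms the comparison $\cF$-isomorphism $\theta=\theta_P^{-1}\theta_Q$ from $\tt Q$ into $\tt P$, defines the multiplicity to be $m({}^\theta(\tt Q_{\tt\delta}),\tt P_{\tt\gamma})$ when $\theta$ appears as an $\cL$-isomorphism with domain $\tt Q_{\tt\delta}$, and declares it zero otherwise; the verification then checks, using Theorem~\ref{3.1} applied to $\tt Q$, that this zero case really corresponds to $Q_\delta\not\leq P_\gamma$. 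Inserting this $\theta$-device into your argument closes the gap and recovers the paper's proof.
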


\begin{proof}
From $B$, we shall construct a pointed
refinement $\hh{\cP}$ of $\cF$ and an
$\cF$-identical isomorphism $\iota :
\hh{\cP} \leftarrow \cL \cP$. Let $\tt{\cF}$
be a set of fully $\cF$-centralized
subgroups of $D$ such that $\tt{\cF}$
is a set of representatives of the
$\cF$-isomorphism classes. For each
$R \in \tt{\cF}$, let $\cL_R$ denote the
set of points of $R$ on $B$. For each
$P \leq D$, let $\tt{P}$ be the unique
element of $\tt{\cF}$ such that
$P \cong_\cF \tt{P}$. We choose and
fix an $\cF$-isomorphism $\theta_P
: P \leftarrow \tt{P}$ and a set
$\hh{\cP}_P$ together with a bijection
$\Theta_P : \hh{\cP}_P \leftarrow
\cL_{\tt{P}}$. For each $\hh{\gamma}
\in \hh{\cP}_P$, we define
$$\tt{\gamma} =
  \Theta_P^{-1}(\hh{\gamma}) \; .$$
For each $\cF$-morphism $\phi$ with
domain $P$, we define
$$\tt{\phi} = \theta_{\phi(P)}^{-1}
  \phi \theta_P \; .$$
Since $\tt{\phi}$ is an $\cF$-automorphism
of the fully $\cF$-centralized subgroup
$\tt{P}$, Corollary \ref{3.2} ensures that
we can form the local point ${}^{\tt{\phi}}
\tt{\gamma}$ of $\hh{P}$ on $B$. We define
$${}^\phi \hh{\gamma} = \Theta_{\phi(P)}
  ({}^{\tt{\phi}} \tt{\gamma}) \; .$$
Let $Q \leq D$ and $\hh{\delta} \in
\hh{\cP}_Q$. When $Q \not\leq P$, we let
$m_{\hh{\cP}}(Q_{\hh{\delta}},
P_{\hh{\gamma}}) = 0$. Now suppose
$Q \leq P$.  Put $\theta = \theta_P^{-1}
\theta_Q$ as an isomorphism with domain
$\tt{Q}$. We define
$$m_{\hh{\cP}}(Q_{\hh{\delta}},
  P_{\hh{\gamma}}) =
  m({}^\theta (\tt{Q}_{\tt{\delta}}),
  \tt{P}_{\tt{\gamma}})$$
when $\theta$ appears as an $\cL$-isomorphism
with domain $\tt{Q}_{\tt{\delta}}$, otherwise
$m_{\hh{\cP}}(Q_{\hh{\delta}},
P_{\hh{\gamma}}) = 0$. Thus far, we have
specified all the data determining $\hh{\cP}$,
but we have not yet shown that $\hh{\cP}$
is a pointed refinement of $\cF$.

For any $P \leq D$, when $\gamma$
denotes an element of $\cL \cP_P$, it is to
be understood that
$$\tt{\gamma} = B \cap
  {}^{\theta_P^{-1}} \gamma$$
which, by Theorem \ref{3.1}, is a point of
$\tt{P}$ on $B$. For such $\gamma$, it is
also to be understood that
$$\hh{\gamma} = \Theta_P(\tt{\gamma}) \; .$$
We let $\iota$ be the family of bijections
$\iota_P : \hh{P}_P \leftarrow \cL \cP_P$
such that $\iota_P(\gamma) = \hh{\gamma}$.

Simultaneously, we shall show that
$\hh{\cP}$ is a pointed refinement of
$\cF$ and that $\iota$ is an $\cF$-identical
isomorphism $\hh{\cP} \leftarrow \cL \cP$.
We are to show preservation of morphisms
and multiplicities. That is to say, we are to
show that, for all $\cF$-morphisms $\phi$
with domain $P$ and $\gamma \in
\cL \cP_P$, we have ${}^\phi \hh{\gamma}
= \hh{{}^\phi \gamma}$, and we are also
to show that, for all $\cL \cP$-objects
$Q_\delta$ and $P_\gamma$ with
$Q \leq P$, we have $m_{\hh{\cP}}
(Q_{\hh{\delta}}, P_{\hh{\gamma}}) =
m(Q_\delta, P_\gamma)$.

Given $\phi$ and $P_\gamma$ as
specified, then
$$\tt{{}^\phi \gamma} = B \cap
  {}^{\theta_{\phi(P)}^{-1} \phi}
  \gamma = B \cap {}^{\tt{\phi} \,
  \theta_P^{-1}} \gamma =
  {}^{\tt{\phi}} \tt{\gamma} =
  {}^{\tt{\phi}} \Theta_P^{-1}
  (\hh{\gamma}) \; .$$
So $\hh{{}^\phi \gamma} =
\Theta_{\phi(P)}(\tt{{}^\phi \gamma})
= {}^\phi \hh{\gamma}$. We have
established preservation of morphisms.

Given an $\cL \cP$-object $Q_\delta$
with $Q \leq P$, then
$$m(Q_\delta, P_\gamma) =
  m({}^{\theta_P^{-1}} (Q_\delta),
  {}^{\theta_P^{-1}} (P_\gamma)) \; .$$
Let $\theta$ be as above. If $\theta$
appears as an $\cL$-isomorphism with
domain $\tt{Q}_{\tt{\delta}}$, then
$$m({}^{\theta_P^{-1}} (Q_\delta),
  {}^{\theta_P^{-1}} (P_\gamma)) =
  m({}^\theta(\tt{Q}_{\tt{\delta}}),
  \tt{P}_{\tt{\gamma}}) =
  m_{\hh{\cP}}(Q_{\hh{\delta}},
  P_{\hh{\gamma}}) \; .$$
Now suppose $\theta$ does not appear
as an $\cL$-isomorphism with domain
$\tt{Q}_{\tt{\delta}}$. Then
$m_{\hh{\cP}}(Q_{\hh{\delta}},
P_{\hh{\gamma}}) = 0$. Since
$\tt{Q}$ is fully $\cF$-centralized,
Theorem \ref{3.1} implies that
${}^{\theta_Q} (Q_\delta) \leq
D_\lambda$. Hence, by the hypothesis
on $\theta$, we have ${}^{\theta_P^{-1}}
(Q_\delta) \not\leq D_\lambda$. Perforce,
${}^{\theta_P^{-1}} (Q_\delta) \not\leq
{}^{\theta_P^{-1}} (P_\gamma)$. So
$Q_\delta \not\leq P_\gamma$, in other
words, $m(Q_\delta, P_\gamma) = 0$.
We have established preservation of
multiplicities.
\end{proof}

\section{Conjectures on bounds}
\label{4}

Let $G$, $D$, $B$, $\cB$ be as introduced in
Section 2. We continue to work with the pointed
fusion system $\cL \cP = \cL \cP(B)$. We shall
discuss three related conjectures concerning
bounds in terms of the defect group $D$.

A statement of Puig's Conjecture can be
found in Th\'{e}venaz \cite[38.5]{The95}.
Confirmation of an assertion stated without
proof in \cite[38.6]{The95} would imply
that the following conjecture is equivalent
to Puig's Conjecture. For a finitely generated
algebra $\Lambda$ over $\cO$, we define
$\FF \Lambda = \FF \otimes_\cO \Lambda$
as an algebra over $\FF$.

\begin{conj} \label{4.1}
{\rm (Weak Puig Conjecture.)} Fixing $D$,
there is a bound on the dimension of
the source $D$-algebra $\FF B$ of the
block algebra $\FF \cB$.
\end{conj}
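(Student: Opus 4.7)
The plan is to reduce the dimensional question for the source algebra $\FF B$ to a counting question for the pointed fusion system $\cL \cP(B)$ via Theorem \ref{3.3}, and then to try to bound all of the data of $\cL \cP(B)$ by a function of $|D|$ alone. Since $\cL \cP(B)$ is an invariant of the interior $D$-algebra $B$ that records both its idempotent-refinement structure and its fusion, a uniform bound on the $\cL \cP$-data should translate into a uniform bound on $\dim_\FF B$; the translation uses the decomposition of $B$ as a sum of indecomposable $\cO(D \times D)$-summands whose ranks are controlled by $|D|$ together with the multiplicity data of $\cL \cP(B)$.

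The first subtask is to bound the number of $\cL \cP$-objects. For each $P \leq D$, the $\cL \cP$-points of $P$ are in bijection with $\Irr(\FF C_G(P) e_P)$ by Remark \ref{2.2}. By the standard theory of Brauer pairs, each such block $e_P$ has defect group $C_D(P)$, hence of order at most $|D|$. A hypothesis bounding $\ell$ of a block by a function of its defect group, for instance the Alperin Weight Conjecture or any partial form sufficient for this purpose, would then yield, on summation over $P \leq D$, a bound on the number of $\cL \cP$-objects depending only on $|D|$.

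The harder subtask is to bound the multiplicities $m(Q_\delta, P_\gamma)$. These are the refinement coefficients of primitive idempotents in $B^P$ with respect to $B^Q$, and the paper's remarks on $\cL \cP^|(B)$ show that the full multiplicity table is recoverable from the entries with $|P : Q| = p$ via matrix relations. One would try to bound those generating entries by studying the diagonal modules $\Dia(P_\xi)$ as indecomposable summands of $\cO G b$: the restriction of $\Dia(P_\xi)$ from $G \times P$ to $G \times Q$ must decompose with multiplicities bounded by a function of $|D|$ precisely when the source algebra dimension is uniformly controlled.

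The main obstacle is exactly this last step, and it is essentially equivalent to the conjecture itself. The passage through $\cL \cP(B)$ is therefore less a route to a proof than a conceptual reshuffling: it isolates the hard content as a local counting problem on the multiposet attached to $D$. A genuine proof would require substantial external input, plausibly a strong form of the Alperin Weight Conjecture controlling multiplicities as well as counts of simples, or a classification-of-finite-simple-groups reduction to a restricted class of quasi-simple blocks where the diagonal modules admit explicit enumeration.
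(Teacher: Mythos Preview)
The statement you are attempting to prove is Conjecture \ref{4.1}, the Weak Puig Conjecture. The paper does not prove it; it is stated as an open conjecture. What the paper does prove is Proposition \ref{4.4}, which asserts that, for fixed $D$, the Weak Puig Conjecture is equivalent to the conjunction of the Weak Donovan Conjecture and the Bounded Multiplicities Conjecture. Your proposal is therefore not to be compared against a proof of the conjecture, because there is none.

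Your proposal is, as you yourself concede, not a proof: you write that the main obstacle ``is essentially equivalent to the conjecture itself'' and that ``a genuine proof would require substantial external input.'' This is an accurate self-assessment. What you have actually sketched is a reduction of the dimension bound for $\FF B$ to a bound on the $\cL \cP$-multiplicities together with a bound on Cartan-type data --- and this is precisely the content of Proposition \ref{4.4}, whose proof shows $\dim_\FF(\FF B) \leq c m^2 |D|^4$ where $c$ is the maximal Cartan invariant and $m$ the maximal $\cL \cP$-multiplicity. So the useful part of your outline rediscovers that proposition rather than proving the conjecture.

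One correction to your first subtask: you do not need the Alperin Weight Conjecture or any hypothesis at all to bound the number of $\cL \cP$-points of a given $P$. The block $e_P$ of $\FF C_G(P)$ has a defect group contained in $D$, and the Brauer--Feit theorem (quoted in the paper as \cite[6.12.1]{Lin18}) already gives $\ell(\FF C_G(P) e_P) \leq |D|^2$ unconditionally. Summing over $P \leq D$, the total number of $\cL \cP$-objects is bounded by a function of $|D|$ with no extra hypotheses. The genuine open content lies entirely in bounding the multiplicities $m(Q_\delta, P_\gamma)$, which is exactly Conjecture \ref{4.3}.
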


The next conjecture was raised in
Eaton--Kessar--K\"{u}lshammer--Sambale
\cite[9.1]{EKKS14}.

\begin{conj} \label{4.2}
{\rm (Weak Donovan Conjecture.)} Fixing
$D$, then there is a bound on the Cartan
invariants of the block algebra $\FF \cB$.
\end{conj}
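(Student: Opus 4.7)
The plan is not to establish Conjecture \ref{4.2} outright --- it is open --- but to derive it from the Weak Puig Conjecture \ref{4.1} using the pointed fusion system machinery developed in Section \ref{3}. First I would record the following counting step. If Weak Puig holds, then for fixed $D$ the source $D$-algebra $\FF B$ has dimension bounded by some $N(D)$. Over the algebraically closed field $\FF$, there are only finitely many interior $D$-algebras of dimension at most $N(D)$ up to isomorphism, so the isomorphism type of $\FF B$ as an interior $D$-algebra assumes only finitely many values. By Theorem \ref{3.3}, the pointed fusion system $\cL \cP = \cL \cP(B)$ is determined by this interior $D$-algebra up to $\cF$-identical isomorphism, hence takes only finitely many values. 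In particular the relative multiplicities $m(Q_\delta, P_\gamma)$ in $\cL \cP$ are bounded by a constant depending only on $D$.

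The second step would be to bound the Cartan invariants of $\FF \cB$ in terms of these pointed-fusion multiplicities. Using the diagonal module description $\Dia(P_\xi) = \cO G i$ and Remark \ref{2.2}, each Cartan entry $c_{\xi, \eta}$ admits an interpretation as a composition multiplicity of the simple module with Brauer character $\eta$ in the projective cover with Brauer character $\xi$, with projective covers themselves corresponding to minimal $\cL \cP$-objects $1_\tau$ via Remark \ref{2.1}. The natural attempt is to read off $c_{\xi, \eta}$ from the decomposition of $\Res \Dia(1_\tau)$ by chaining the refinement-compatible multiplicities $m(1_{\eta'}, P_\gamma)$ through intermediate local pointed groups $P_\gamma$ up to $D_\lambda$. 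If every Cartan invariant can be bounded by a uniformly-bounded sum of such $\cL \cP$-multiplicities, then the first step closes out the argument.

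The main obstacle, unsurprisingly, is this second step: exhibiting a clean formula, or at least a clean inequality, that bounds each $c_{\xi, \eta}$ in terms of $\cL \cP$-multiplicities alone. A likely shortcut is to notice that isomorphism of interior $D$-algebras $\FF B \cong \FF B'$ between source algebras induces a splendid Morita equivalence of the ambient block algebras, and Cartan invariants are Morita invariants. Combined with the splendid-Morita invariance of $\cL \cP$ promised in Theorem \ref{5.6}, this furnishes a direct bridge: finitely many source algebras, hence finitely many Morita classes, hence finitely many Cartan matrices, hence a uniform bound on Cartan entries. Either route converts the Weak Puig bound into the Weak Donovan bound; the technical choice between them --- direct multiplicity estimation versus Morita-invariance --- is the one substantive issue to decide in writing up the reduction.
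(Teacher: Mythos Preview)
The statement is a conjecture, and the paper does not prove it; what the paper does prove is Proposition~\ref{4.4}, one direction of which is the implication ``Weak Puig $\Rightarrow$ Weak Donovan'' that you are attempting. So the relevant comparison is between your reduction and the paper's proof of that direction of Proposition~\ref{4.4}.

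Your argument contains a genuine error. The assertion that over the algebraically closed field $\FF$ there are only finitely many interior $D$-algebras of dimension at most $N(D)$ up to isomorphism is false: already the moduli of associative $\FF$-algebra structures on a fixed finite-dimensional vector space contain continuous families, and imposing an interior $D$-algebra structure does not collapse these to finitely many classes. Both of your routes --- bounding multiplicities via finiteness of pointed fusion systems, and bounding Cartan matrices via finiteness of Morita classes --- rest on this false finiteness step, so neither goes through as written.

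The paper's argument avoids this entirely and is essentially one line: since $\FF B$ and $\FF \cB$ are Morita equivalent they share the same Cartan invariants, and every Cartan invariant of $\FF B$ is trivially bounded by $\dim_\FF(\FF B)$ (a Cartan entry counts composition factors of a projective indecomposable summand of the regular module). Thus a bound on $\dim_\FF(\FF B)$ immediately bounds the Cartan invariants; no finiteness of isomorphism types, no Theorem~\ref{3.3}, and no Theorem~\ref{5.6} are needed. Your elaborate machinery is aimed at a target that falls to a direct dimension count.
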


\begin{conj} \label{4.3}
{\rm (Bounded Multiplicities Conjecture.)}
Fixing $D$, then there is a bound on the
multiplicities of the pointed fusion system
$\cL \cP$ of $\cB$.
\end{conj}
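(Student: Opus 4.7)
The plan is to establish Conjecture \ref{4.3} by deducing it from the Weak Puig Conjecture \ref{4.1}, using Theorem \ref{3.3} together with a direct count of orthogonal idempotents.

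The key point is an elementary bound on each $\cL \cP$-multiplicity in terms of $\dim_\FF \FF B$. By Theorem \ref{3.3}, the pointed fusion system $\cL \cP$ is faithfully encoded in the interior $D$-algebra $B$: the construction in the proof of \ref{3.3} realises each multiplicity $m_{\cL \cP}(Q_\delta, P_\gamma)$ as the relative multiplicity $m({}^\theta (\tt{Q}_{\tt{\delta}}), \tt{P}_{\tt{\gamma}})$ of local points $\tt{P}_{\tt{\gamma}}, \tt{Q}_{\tt{\delta}} \leq D_\lambda$ on $B$. Picking an idempotent $i \in \tt{\gamma}$, that multiplicity is the number of idempotents from $\tt{\delta}$ appearing in a decomposition of $i$ as a sum of pairwise orthogonal primitive idempotents of $B^{\tt{Q}}$. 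Any family of non-zero pairwise orthogonal idempotents in the finite-dimensional $\FF$-algebra $\FF B$ is linearly independent, so it has cardinality at most $\dim_\FF \FF B$, yielding
\[
m_{\cL \cP}(Q_\delta, P_\gamma) \;\leq\; \dim_\FF \FF B .
\]
Hence Weak Puig \ref{4.1} immediately implies Bounded Multiplicities \ref{4.3}, with the same bound $N(D)$.

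The main obstacle is Conjecture \ref{4.1} itself, which is among the deeper open problems in modular representation theory. If one instead wished to prove \ref{4.3} independently of \ref{4.1}, the route I would try is to bound multiplicities recursively, using the factorisation of $\cF$-morphisms and the $p$-local structure of $D$ to reduce $m_{\cL \cP}(Q_\delta, P_\gamma)$ for arbitrary $Q \leq P$ to the case $|P : Q| = p$. Even for that minimal case, however, I do not see any a priori bound that does not secretly control the source algebra, so any serious plan for \ref{4.3} appears to demand progress on \ref{4.1}.
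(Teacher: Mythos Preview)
The statement you are attempting is a \emph{conjecture}; the paper does not prove it and does not claim to. What the paper does, in Proposition~\ref{4.4}, is show that Conjecture~\ref{4.3} is \emph{equivalent}, given the Weak Donovan Conjecture, to the Weak Puig Conjecture~\ref{4.1}. Your proposal is therefore not a proof of \ref{4.3}: it is a (correct) derivation of one half of one implication in Proposition~\ref{4.4}, namely that \ref{4.1} implies \ref{4.3}. You yourself recognise this when you call \ref{4.1} ``the main obstacle''.

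Comparing your reduction with the paper's: both obtain the bound $m_{\cL\cP}(Q_\delta,P_\gamma)\leq\dim_\FF\FF B$, but by different routes. The paper first argues, via the chain inequality for relative multiplicities, that the maximum multiplicity is attained at some $m(1_\tau,D_\lambda)$, and then identifies this number with $\dim_\FF W$ for a simple $B$-module $W$. Your argument is more elementary and more uniform: you observe (after invoking Theorem~\ref{3.3} to work inside $B$) that the multiplicity counts certain pairwise orthogonal non-zero idempotents of $\FF B$, and these are linearly independent. That is a valid shortcut, and it avoids the detour through the maximum. Either way, the implication \ref{4.1} $\Rightarrow$ \ref{4.3} is what is established; an unconditional proof of \ref{4.3} remains open, in the paper and in your proposal alike.
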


\begin{pro}
\label{4.4}
Fixing $D$, then the Weak Puig Conjecture
holds for $D$ if and only if the Weak
Donovan Conjecture and the Bounded
Multiplicities Conjecture hold for $D$.
\end{pro}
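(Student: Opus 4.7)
The plan is to handle both implications by working with the source-algebra realization $\FF B \cong i(\FF G b)i$, where $i = 1_B$ is the source idempotent, and by using the link provided by the diagonal modules of Section \ref{2} between $\cL \cP$-multiplicities and the module-theoretic structure of $\FF G i$.

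For the forward direction Weak Puig $\Rightarrow$ (Weak Donovan and Bounded Multiplicities): since $\FF B$ and $\FF \cB$ are Morita equivalent, they have the same Cartan matrix, and a bound $\dim_\FF \FF B \leq N$ forces $\ell(\cB) \leq N$ together with each entry $c_{\eta \zeta} = \dim_\FF \Hom_{\FF B}(P(\eta), P(\zeta)) \leq N$, giving Weak Donovan. For multiplicities, after replacing $P_\gamma$ by an $\cF$-isomorphic copy with $P$ fully $\cF$-centralized (which does not change multiplicities, by the compatibility condition), Theorem \ref{3.1} guarantees that $\gamma \cap B$ is a point of $P$ on $B$, so the relative multiplicity can be computed inside $B^Q$, where it is bounded by the number of mutually orthogonal idempotents in any primitive decomposition, hence by $\dim_\FF (\FF B)^Q \leq N$.

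For the converse, assume bounds $\ell(\cB) \leq N_\ell$, $c_{\eta \zeta} \leq N_c$, and $m(Q_\delta, P_\gamma) \leq N_m$, all depending only on $D$. Let $\xi$ be the $\cP(B)$-point of $D$ associated via Remark \ref{2.2} to the local point of $D$ on $\cB$ that contains $i$. By the definition of the diagonal module, $\cO G i$ realises $\Dia(D_\xi)$, so decomposing as a left $\FF G$-module gives
$$\FF G i \;\cong\; \bigoplus_\eta P(\eta)^{m(1_\eta, D_\xi)},$$
the sum taken over the $\cP(B)$-points $\eta$ of the trivial subgroup. The standard isomorphism $\FF B = i(\FF G)i \cong \End_{\FF G}(\FF G i)^{\mathrm{op}}$, together with $\dim_\FF \Hom_{\FF G}(P(\eta), P(\zeta)) = c_{\eta \zeta}$, then yields
$$\dim_\FF \FF B \;=\; \sum_{\eta, \zeta} m(1_\eta, D_\xi)\, m(1_\zeta, D_\xi)\, c_{\eta \zeta} \;\leq\; N_\ell^{\, 2}\, N_m^{\, 2}\, N_c,$$
which is the bound required for Weak Puig.

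The main obstacle I anticipate is a matter of interpretation of Conjecture \ref{4.2}: the displayed sum contains boundedly many nonzero terms only if a bound on $\ell(\cB)$ is included in Weak Donovan, and not merely a bound on the individual Cartan entries. I would read the conjecture as incorporating such a bound on $\ell(\cB)$, in line with the standard formulation in \cite{EKKS14}, rather than treat it as an additional independent hypothesis.
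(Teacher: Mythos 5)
Your argument is essentially the paper's, with one genuine gap at the point you yourself flag. The forward direction is sound: the Cartan bound is immediate from the Morita equivalence, and your bound on multiplicities (pass to a fully $\cF$-centralized representative using the compatibility condition, apply Theorem \ref{3.1} to get $i \in \gamma \cap B$, and count mutually orthogonal idempotents inside $i(\FF\cB)^Q i \subseteq (\FF B)^Q$) is a valid alternative to the paper's route, which instead uses the transitivity inequality $m(Q_\delta,P_\gamma) \geq \sum_{\epsilon} m(Q_\delta,R_\epsilon)\,m(R_\epsilon,P_\gamma)$ to show the maximum multiplicity is attained at some $m(1_\tau,D_\lambda)$, identified with the dimension of a simple $B$-module. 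Your converse computation $\dim_\FF \FF B = \sum_{\eta,\zeta} m_\eta m_\zeta c_{\eta\zeta}$ via $\FF B \cong \End_{\FF G}(\FF G i)^{\mathrm{op}}$ is the same bound $c\,m^2\ell^2$ that the paper extracts from $\FF B \cong \bigoplus_V L_V^{\dim V}$, just packaged differently.

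The gap is your treatment of $\ell(\cB)$. Conjecture \ref{4.2}, as stated here and in \cite{EKKS14}, bounds only the Cartan entries, and a bound on the entries of a square matrix says nothing about its size; so you cannot simply ``read in'' a bound on $\ell(\cB)$ without changing the statement being proved. The correct resolution --- and the one the paper uses --- is that no extra hypothesis is needed: the Brauer--Feit theorem \cite[6.12.1]{Lin18} gives $\ell(\cB) \leq |D|^2/4 + 1$ unconditionally, and this is precisely the source of the factor $|D|^4$ in the paper's bound $\dim_\FF(\FF B) \leq c\,m^2 |D|^4$. With Brauer--Feit substituted for your proposed reinterpretation of the conjecture, your proof is complete.
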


\begin{proof}
Fix $\cB$ and $B$. Let $c$ be the maximum
of the Cartan invariants of $\cB$. Let $m$ be
the maximum of the $\cL \cP$-multiplicities.
We shall show that $c \leq \dim_\FF(\FF B)
\geq m$ and
$$\dim_\FF(\FF B) \leq c m^2 |D|^4 \; .$$

Since $\FF \cB$ and $\FF B$ are Morita
equivalent, they have the same Cartan
invariants and $c \leq \dim_\FF(\FF B)$.
Given $\cL \cP$-objects $Q_\delta$ and
$P_\gamma$ with $Q \leq P$ then, for
all $Q \leq R \leq P$, we have
$$m(Q_\delta, P_\gamma) \geq
  \sum_{\epsilon \in \cL \cP_R}
  m(Q_\delta, R_\epsilon) \,
  m(R_\epsilon, P_\gamma) \; .$$
Therefore, $m = m(1_\tau, D_\lambda)$
for some point $\tau$ on $\cB$. Letting
$W$ be a simple $B$-module not
annihilated by the point $B \cap \tau$
on $B$, then $m = \dim_\FF(W) \leq
\dim_\FF(\FF B)$.

By the above Morita equivalence, the
number $\ell$ of isomorphism classes
of simple $B$-modules is equal to the
number of isomorphism classes of simple
$\cB$-modules. A theorem of Brauer and
Feit in Linckelmann \cite[6.12.1]{Lin18}
asserts that $\ell \leq |D|^2 / 4 + 1$.
If $D$ is trivial, then $\ell = 1$. So
$\ell \leq |D|^2$. We have
$$\dim_\FF (\FF B) = \sum_V
  \dim_\FF(V) \dim_\FF(L_V)$$
where $V$ runs over the simple
$\FF B$-modules up to isomorphism
and $L_V$ denotes the projective
cover of $V$. Each $\dim_\FF(V)
\leq m$ and $\dim_\FF(L_V) \leq
cm \ell$. Therefore, $\dim_\FF(\FF B)
\leq c m^2 \ell^2$.
\end{proof}

In the case where $p = 2$ and $D$ is
abelian, the Weak Donovan Conjecture
was proved in \cite[9.2]{EKKS14}. Hence
we obtain the following corollary.

\begin{cor} \label{4.5}
Fixing $D$ and supposing $p = 2$ and $D$
is abelian, then the Weak Puig Conjecture
holds for $D$ if and only if the Bounded
Multiplicities Conjecture holds for $D$.
\end{cor}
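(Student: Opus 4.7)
The plan is essentially to invoke Proposition \ref{4.4} together with the known case of the Weak Donovan Conjecture cited just before the corollary. Proposition \ref{4.4} provides the three-way relationship
\[
\text{Weak Puig} \;\Longleftrightarrow\; \text{Weak Donovan} \;\wedge\; \text{Bounded Multiplicities}
\]
(for the given defect group $D$), so once one of the two conjuncts on the right is known unconditionally, the equivalence collapses to an equivalence between the Weak Puig Conjecture and the remaining conjunct.

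Concretely, I would first restate Proposition \ref{4.4} as the biconditional above, with $D$ fixed. Next, I would invoke the hypothesis $p = 2$ and $D$ abelian in order to cite the result of Eaton--Kessar--K\"ulshammer--Sambale \cite[9.2]{EKKS14}, which confirms the Weak Donovan Conjecture for $D$ in this setting; this means the ``Weak Donovan'' conjunct is true, so it can be deleted from the right-hand side without changing its truth value. The forward direction (Weak Puig implies Bounded Multiplicities) is already contained in Proposition \ref{4.4} and needs no additional hypothesis. For the reverse direction, the hypothesis on $p$ and $D$ gives us Weak Donovan for free, and then Proposition \ref{4.4} supplies Weak Puig from the conjunction of Weak Donovan and Bounded Multiplicities.

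There is no real obstacle here; the corollary is essentially a bookkeeping consequence. The only point worth being careful about is ensuring that the quantifier ``fixing $D$'' is handled consistently: the bound asserted in each conjecture depends on $D$, and one must check that the bounds produced by Proposition \ref{4.4} are uniform in the same sense as the bounds assumed. Since the proof of Proposition \ref{4.4} produces an explicit inequality $\dim_\FF(\FF B) \leq c m^2 |D|^4$ with $c$ and $m$ the respective bounds from the Weak Donovan and Bounded Multiplicities Conjectures, the uniformity is automatic for each fixed $D$, and the corollary follows at once.
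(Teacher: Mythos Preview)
Your proposal is correct and matches the paper's approach exactly: the corollary is stated immediately after the citation of \cite[9.2]{EKKS14} and is derived simply by combining that result with Proposition \ref{4.4}. There is nothing to add.
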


\section{Isomorphisms induced by equivalences}
\label{5}

Let $G$, $b$, $D$, $B$, $\cB$ be as in
Section \ref{2}. In Theorem \ref{5.6}, we
shall describe how a splendid Morita equivalence
from $\cB$ gives rise to an isomorphism of pointed
fusion systems. In Theorem \ref{5.7}, we shall
describe how a splendid stable equivalence of
Morita type from $\cB$ gives rise to isomorphisms
between stable parts of pointed fusion systems.

Given a group $R$, we define $\Delta(R) =
\{ (z, z) : z \in R \}$ as a subgroup of
$R {\times} R$. Recall, given a $p$-subgroup
$P$ of $G$ and an $\cO G$-module $M$, the
{\bf Brauer construction} of $M$ at $P$ is
defined to be the $\FF N_G(P)/P$-module
$$M(P) = M^P /
  \sum_{Q \leq P} \tr_Q^P(M^Q)$$
where $\tr_Q^P$ denotes the transfer map
$M^P \leftarrow M^Q$. A theorem of
Brou\'{e} \cite[3.2]{Bro85} implies that
if $M$ has vertex $P$, then $M(P)$ is
projective and indecomposable.

\begin{lem} \label{5.1}
Let $E$ be a finite $p$-group and let $M$ be
a permutation $\cO E$-$\cO E$-bimodule that
is free as a left $\cO E$-module and as a right
$\cO E$-module. Let $P \leq E$ and let $N$ be
a permutation $\cO E$-$\cO P$-bimodule that
is free as a left $\cO E$-module and as a right
$\cO P$-module. Suppose
$$(M \otimes_{\cO E} N)(\Delta(P)) \neq 0 \; .$$
Then there exists a monomorphism $\phi : E
\leftarrow P$ such that $N(\Delta(\phi)) \neq 0$.
\end{lem}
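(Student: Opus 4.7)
The strategy is to translate the Brauer-construction hypothesis into a statement about fixed points on a basis, and then to read off the required monomorphism from the balanced-product combinatorics. Write $M = \cO[X_M]$ and $N = \cO[X_N]$, where $X_M$ is an $E \times E$-set and $X_N$ is an $E \times P$-set arising from the permutation bimodule structures. The freeness hypotheses translate as follows: $E$ acts freely on $X_M$ from both sides, $E$ acts freely on $X_N$ from the left, and $P$ acts freely on $X_N$ from the right. Because $M$ is free as a right $\cO E$-module, the canonical identification gives
$$M \otimes_{\cO E} N \cong \cO[X_M \times_E X_N]$$
as a permutation $\cO(E \times P)$-module.

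For a permutation module of a $p$-group, the Brauer construction at a subgroup $H$ has $\FF$-basis the $H$-fixed basis elements, so the hypothesis produces a $\Delta(P)$-fixed class $[x, y] \in X_M \times_E X_N$. Unpacking the balanced-product equivalence $(x' e, y') \sim (x', e y')$ together with the $\Delta(P)$-fixity of $[x, y]$ shows that for each $z \in P$ there is a (necessarily unique, by right-$E$-freeness) element $e(z) \in E$ with
$$z \cdot x = x \cdot e(z), \qquad e(z) \cdot y \cdot z = y,$$
the second equation being precisely the assertion that the pair $(e(z), z) \in E \times P$ fixes $y$ under the bimodule action on $X_N$.

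The first relation, combined with right-$E$-freeness of $X_M$, readily gives that $e : P \to E$ is a group homomorphism, since $(z_1 z_2) \cdot x = z_1 \cdot (x \cdot e(z_2)) = x \cdot e(z_1) e(z_2)$. Left-$E$-freeness of $X_M$ forces injectivity, since $e(z) = 1$ yields $zx = x$ and hence $z = 1$. So $\phi := e$ is a monomorphism $P \to E$, and the second relation then says precisely that $y$ is fixed by $\Delta(\phi) = \{(\phi(z), z) : z \in P\} \leq E \times P$. It follows that $y$ represents a nonzero element of $N(\Delta(\phi))$, completing the proof.

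The only technical care required is in the bookkeeping of left/right actions when converting bimodules into $E \times E$- and $E \times P$-sets and in identifying the equivalence relation on the balanced product; no substantive obstacle arises.
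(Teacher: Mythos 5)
Your argument is correct and is essentially the paper's own proof: both pass to stable bases, identify $M \otimes_{\cO E} N$ with the permutation module on the amalgamated product of the bases, extract a $\Delta(P)$-fixed basis element, and read off the monomorphism $\phi$ from the balanced-product relation using the two-sided freeness. The only blemish is a harmless sign slip in your second relation, which should read $e(z)\cdot y\cdot z^{-1}=y$ (equivalently $e(z)\cdot y = y\cdot z$), matching the paper's $\phi(x)\,t\,x^{-1}=t$.
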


\begin{proof}
Let $S$ be an $E$-$E$-stable basis for $M$.
Let $T$ be an $E$-$P$-stable basis for $N$.
The hypothesis implies that there exists
$(s, t) \in S {\times} T$ such that
$xs \otimes tx^{-1} = s \otimes t$ for all
$x \in P$. There is a monomorphism $\phi :
E \leftarrow P$ given by $(x s \phi(x)^{-1},
\phi(x) t x^{-1}) = (s, t)$.
\end{proof}

Again, let $P$ be a $p$-subgroup of $G$.
In view of the equality
$$N_{G \times P}(\Delta(P))
  = (C_G(P) {\times} 1) \Delta(P)$$
we have an evident isomorphism
$$N_{G \times P}(\Delta(P))/\Delta(P)
  \cong C_G(P) \; .$$
Via that isomorphism, given an
$\cO(G {\times} P)$-module $M$, we
can regard $M(\Delta(P))$ as an
$\FF C_G(P)$-module.

Adapting a definition in Section \ref{2},
for any pointed group $P_\mu$ on
$\cO G$, choosing $i \in \mu$, we define
the {\bf diagonal module}
$$\Dia(P_\mu) = \cO G i$$
as an $\cO(G {\times} P)$-module.
Again, it is clear that $\Dia(P_\mu)$ is
well-defined independently of the choice
of $i$. Again, the primitivity of $i$ implies
that $\Dia(P_\mu)$ is indecomposable. We
claim that the point $\mu$ of $P$ is
local if and only if $\Dia(P_\mu)$ has
vertex $\Delta(P)$. Since $\Dia(P_\mu)$
is a direct summand of the permutation
$\cO(G {\times} P)$-module $\cO G \cong
\cO(G {\times} P)/\Delta(P)$, some vertex
of $\Dia(P_\mu)$ is contained in $\Delta(P)$.
There is an $\cO$-linear isomorphism
$$\End_{\cO(G \times 1)}(\cO G i)
  \cong \cO G i$$
given by $\sigma \leftrightarrow \sigma(i)$
for an $\cO(G {\times} 1)$-endomorphism
$\sigma$ of $\cO G i$. Given $Q \leq P$, then
$\tr_{\Delta(Q)}^{\Delta(P)} (\sigma) =
\id_{\cO G i}$ if and only if
$\tr_{\Delta(Q)}^{\Delta(P)} (\sigma(i)) = i$.
The claim follows. We mention that diagonal
modules are discussed more systematically in
\cite{Bar}, but our present use of them is
independent of the material there.

Let $P_\gamma$ be a local pointed group on
$\cB$. Then $P_\gamma$ is a pointed group
on $\cO G$ and we can form the diagonal
module $M = \Dia(P_\gamma)$, which is
indecomposable with vertex $\Delta(P)$. Since
$bM = M$, we can regard $M$ as a
$\cB$-$\cP$-bimodule. We have
$\br_P(b) M(\Delta(P)) = M(\Delta(P))$. So
$M(\Delta(P))$ is a projective indecomposable
$\FF C_G(P) \br_P(b)$-module.

\begin{pro} \label{5.2}
Let $P \leq D$. Then the condition
$\Dia(P_\gamma) \cong M$ characterizes a
bijective correspondence $\gamma
\leftrightarrow [M]$ between:

\tinyskip
\noin {\bf (a)} the local points $\gamma$ of
$P$ on $\cB$ such that $P_\gamma$ is
overshadowed by $B$,

\tinyskip
\noin {\bf (b)} the isomorphism classes
$[M]$ of indecomposable
$\cB$-$\cO P$-bimodules $M$ with vertex
$\Delta(P)$ such that $M \smid \cB$ and
$e_P M(\Delta(P)) = M(\Delta(P))$.
\end{pro}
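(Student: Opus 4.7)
The plan is to realise the stated correspondence as a specialisation of the standard bijection between points of $P$ on $\cB$ and isomorphism classes of indecomposable direct summands of $\cB$ viewed as a $\cB$-$\cO P$-bimodule. First, a primitive decomposition $1_\cB = \sum_i i$ inside $\cB^P$ yields $\cB = \bigoplus_i \cB i$ as a $\cB$-$\cO P$-bimodule, and $\cB i \cong \cB j$ holds precisely when $i$ and $j$ are $(\cB^P)^\times$-conjugate, equivalently when they lie in the same point of $P$ on $\cB$. Since $\cB i = \cO G i = \Dia(P_\mu)$ whenever $i \in \mu$, Krull--Schmidt gives a bijection $\mu \leftrightarrow [\Dia(P_\mu)]$ between the set of points $\mu$ of $P$ on $\cB$ and the set of isomorphism classes of indecomposable $\cB$-$\cO P$-bimodules $M$ satisfying $M \smid \cB$.

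It then remains to check that, under this bijection, the two side conditions in (a) match the two side conditions in (b). The equivalence ``$\mu$ is local'' $\Leftrightarrow$ ``$\Dia(P_\mu)$ has vertex $\Delta(P)$'' is precisely the claim proved just before the proposition, using the $\cO$-linear isomorphism $\End_{\cO(G \times 1)}(\cO G i) \cong \cO G i$ to transport relative traces of endomorphisms to relative traces of elements. For the remaining condition, I would establish the identification
$$\Dia(P_\mu)(\Delta(P)) \cong \FF C_G(P) \br_P(i)$$
of left $\FF C_G(P)$-modules, where $i \in \mu$ is local. Under this identification, the equation $e_P M(\Delta(P)) = M(\Delta(P))$ becomes $e_P \br_P(i) = \br_P(i)$, and since $\br_P(\mu)$ is a point of $P$ on $\FF C_G(P)$, this in turn is equivalent to $\br_P(\mu) \subset \FF C_G(P) e_P$, that is, to $P_\mu \in (P, e_P)$.

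The main obstacle, such as it is, is the slightly fiddly identification $M(\Delta(P)) \cong \FF C_G(P) \br_P(i)$. This amounts to unwinding the Brauer construction on the summand $\cO G i$ of the permutation $\cO(G \times P)$-module $\cO G \cong \cO(G \times P)/\Delta(P)$: the $\Delta(P)$-fixed points of $\cO G i$ are $(\cO G)^P i$, and quotienting by relative traces from proper subgroups of $\Delta(P)$ collapses everything outside $\FF C_G(P) \br_P(i)$. Once that is in hand, both the injectivity and the surjectivity of $\gamma \leftrightarrow [M]$ follow at once from the matched conditions above, with no further work required.
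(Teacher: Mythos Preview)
Your proposal is correct and follows essentially the same route as the paper's own proof. The paper's argument is terser---it simply cites the preceding discussion for the bijection between local points and isomorphism classes of indecomposable summands with vertex $\Delta(P)$, then asserts without detail that the overshadowing condition matches $e_P M(\Delta(P)) = M(\Delta(P))$---whereas you spell out the Krull--Schmidt step and supply the identification $\Dia(P_\mu)(\Delta(P)) \cong \FF C_G(P)\br_P(i)$ that makes the final equivalence transparent.
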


\begin{proof}
By comments above, the specified condition
characterizes a bijective correspondence
between the local points $\gamma$ of $P$
on $\cB$ and the isomorphism classes $[M]$
of indecomposable $\cB$-$\cO P$-bimodules
$M$ with vertex $\Delta(P)$ such that
$M \smid \cB$. For such $\gamma$ and $M$,
supposing $\gamma \leftrightarrow [M]$,
then $P_\gamma$ is overshadowed by $B$ if
and only if $e_P M(\Delta(P)) = M(\Delta(P))$.
\end{proof}

We shall be needing two lemmas describing
how isomorphisms between
$\cL \cP(B)$-objects induce isomorphisms
between diagonal modules and how
multiplicities between $\cL \cP(B)$-objects
can be expressed in terms of diagonal
modules.

\begin{lem} \label{5.3}
Given an $\cL \cP(B)$-object $P_\gamma$
and an $\cF$-morphism $\phi$ with domain
$P$, then
$$\Dia({}^\phi(P_\gamma)) \cong
  \Dia(P_\gamma) \otimes_{\cO P}
  (P {\times} \phi(P))/\Delta(\phi^{-1}) \; .$$
\end{lem}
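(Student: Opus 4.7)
The plan is to realize both sides of the claimed isomorphism as explicit submodules of $\cO G$ and to construct the isomorphism as right multiplication by a unit implementing the fusion. First I would pick $i \in \gamma$, so that $\Dia(P_\gamma) = \cO G i$ as a $(\cB, \cO P)$-bimodule, and use the definition of ${}^\phi \gamma$ to choose a unit $u \in \cB^{\Delta(\phi)}$ — equivalently, an element satisfying $uz = \phi(z)u$ for all $z \in P$. Setting $j = u i u^{-1} \in {}^\phi \gamma$, I obtain $\Dia({}^\phi(P_\gamma)) = \cO G j$.

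Second, I would unpack the coset bimodule. Every coset in $(P \times \phi(P))/\Delta(\phi^{-1})$ has a unique representative of the form $(x, 1) \Delta(\phi^{-1})$ with $x \in P$, and under the standard $(\cO P, \cO \phi(P))$-bimodule convention a direct calculation shows that the right action of $b \in \phi(P)$ sends $(x, 1)\Delta(\phi^{-1})$ to $(x \phi^{-1}(b), 1) \Delta(\phi^{-1})$. Hence $(P \times \phi(P))/\Delta(\phi^{-1})$ identifies with $\cO P$ carrying its usual left $\cO P$-multiplication and a right $\phi(P)$-action through $\phi^{-1}$. Tensoring then presents the left-hand side of the claimed isomorphism as $\cO G i$ equipped with the twisted right action $(gi) \cdot b = g i \phi^{-1}(b)$.

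Third, I would define $\theta : \cO G i \to \cO G j$ by $\theta(gi) = g i u^{-1}$; the rewriting $g i u^{-1} = g u^{-1} j$ confirms the image lies in $\cO G j$, and left $\cB$-linearity is immediate. The one nontrivial check is right $\phi(P)$-compatibility, which follows because the fusion condition $uz = \phi(z) u$ rearranges to $u^{-1} b = \phi^{-1}(b) u^{-1}$ for $b \in \phi(P)$, so that $\theta(gi) \cdot b = g i u^{-1} b = g i \phi^{-1}(b) u^{-1} = \theta((gi) \cdot b)$. Bijectivity is witnessed by the inverse $gj \mapsto g j u = g u i$.

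The main obstacle is purely organizational: pinning down the $(\cO P, \cO \phi(P))$-bimodule convention on the coset space $(P \times \phi(P))/\Delta(\phi^{-1})$ so that the twisting produced on the tensor product matches exactly the conjugation action carried out by $u$. Once the convention is fixed, the whole argument rests on the one-line fusion identity defining $u \in \cB^{\Delta(\phi)}$.
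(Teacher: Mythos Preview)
Your argument is correct and follows essentially the same route as the paper's proof: choose $i\in\gamma$ and a unit $u\in\cB^{\Delta(\phi)}$, observe that $\cO G\,({}^u i)=\cO G\,i\,u^{-1}$, and use the relation $u^{-1}\in\cB^{\Delta(\phi^{-1})}$ to verify that right multiplication by $u^{-1}$ intertwines the $\phi^{-1}$-twisted right $\cO\phi(P)$-action with the standard one. The paper compresses your steps (2)--(5) into those two observations; your version simply unpacks the identification of the coset bimodule and the bimodule compatibility of the map $gi\mapsto gi\,u^{-1}$.
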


\begin{proof}
Let $i \in \gamma$ and let $u$ be a unit
in $\cB^{\Delta(\phi)}$. Then ${}^u i \in
{}^\phi \gamma$. We have
$\cO G . {}^u i = \cO G i u^{-1}$ and
$u^{-1} \in \cB^{\Delta(\phi^{-1})}$.
\end{proof}

\begin{lem} \label{5.4}
Given $\cL \cP(B)$-objects $P_\gamma$
and $Q_\delta$ with $Q \leq P$, then
$$m(Q_\delta, P_\gamma) =
  m(\Dia(Q_\delta), {}_{G \times Q}
  \Res {}_{G \times P} (P_\gamma)) =
  m(\Dia(Q_\delta), \Dia(P_\gamma)
  \otimes_{\cO P}
  (P {\times} Q)/\Delta(Q)) \; .$$
\end{lem}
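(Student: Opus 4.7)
The plan is to read off the first equality from a primitive decomposition of an idempotent $i \in \gamma$ inside the larger fixed-point algebra $\cB^Q$, and then obtain the second equality by the standard identification of restriction of bimodules with a tensor product against a permutation biset. I would fix $i \in \gamma$, write $i = \sum_k j_k$ as a sum of pairwise orthogonal primitive idempotents $j_k \in \cB^Q$, and let $\delta_k$ denote the point of $Q$ on $\cB$ containing $j_k$. By definition, $m(Q_\delta, P_\gamma)$ is the number of indices $k$ with $\delta_k = \delta$. Since each $j_k$ lies in $\cB^Q$, the left ideals $\cO G j_k$ are stable under right multiplication by $Q$, so
\begin{equation*}
{}_{G \times Q} \Res {}_{G \times P}\, \Dia(P_\gamma) \;=\; \cO G i \;=\; \bigoplus_k \cO G j_k \;=\; \bigoplus_k \Dia(Q_{\delta_k})
\end{equation*}
is a decomposition of $\cO(G \times Q)$-modules into indecomposables. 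By Krull--Schmidt, the quantity $m(\Dia(Q_\delta), \Res \Dia(P_\gamma))$ equals the number of $k$ with $\Dia(Q_{\delta_k}) \cong \Dia(Q_\delta)$, so the first equality reduces to showing that the assignment $\delta_k \mapsto [\Dia(Q_{\delta_k})]$ distinguishes $\delta$ from every other point $\delta_k$ that can arise.

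The hard part is precisely this distinguishing step, which I would settle by a three-case analysis applied to each $\delta_k \neq \delta$. If $\delta_k$ is not local, then $\Dia(Q_{\delta_k})$ has vertex strictly inside $\Delta(Q)$ by the Brauer-pair argument sketched just before Proposition~\ref{5.2}, whereas $\Dia(Q_\delta)$ has vertex $\Delta(Q)$ since $\delta$ is local, forbidding isomorphism. If $\delta_k$ is local but $Q_{\delta_k}$ is not overshadowed by $B$, then $e_Q$ annihilates the Brauer quotient $\Dia(Q_{\delta_k})(\Delta(Q))$ while acting as the identity on $\Dia(Q_\delta)(\Delta(Q))$, again ruling out isomorphism. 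If $\delta_k$ is local and $Q_{\delta_k}$ is overshadowed by $B$, then the bijective correspondence of Proposition~\ref{5.2} directly yields the required non-isomorphism. Conversely, if $\delta_k = \delta$ then $j_k$ and an idempotent of $\delta$ are conjugate by a unit in $\cB^Q$, giving $\Dia(Q_{\delta_k}) \cong \Dia(Q_\delta)$, as needed.

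For the second equality, I would identify $\cO P$, viewed as an $(\cO P, \cO Q)$-bimodule with $Q$ acting on the right through the inclusion $Q \leq P$, with the permutation bimodule on the $(P, Q)$-biset $P$, which via $p \leftrightarrow (p, 1)\Delta(Q)$ is the $\cO(P \times Q)$-permutation module $\cO[(P \times Q)/\Delta(Q)]$. The chain of isomorphisms
\begin{equation*}
\Dia(P_\gamma) \otimes_{\cO P} (P \times Q)/\Delta(Q) \;\cong\; \Dia(P_\gamma) \otimes_{\cO P} \cO P \;\cong\; {}_{G \times Q} \Res {}_{G \times P}\, \Dia(P_\gamma)
\end{equation*}
then delivers the second equality.
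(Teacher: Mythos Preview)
Your proof is correct and follows the same route as the paper, which dispatches the first equality as ``clear'' and for the second simply observes that $\dash \otimes_{\cO P} \cO(P{\times}Q)/\Delta(Q)$ is the restriction functor from right $\cO P$-modules to right $\cO Q$-modules. Your three-case analysis for the first equality is more than is needed: for primitive idempotents $j, j'$ of $(\cO G)^Q$, one has $\cO G j \cong \cO G j'$ as $\cO(G{\times}Q)$-modules if and only if $j$ and $j'$ are conjugate in $(\cO G)^Q$, i.e.\ lie in the same point of $Q$ on $\cO G$, and this holds with no locality or overshadowing hypothesis, so the case split can be dropped.
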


\begin{proof}
The first equality is clear. The functor
$\dash \otimes_{\cO P} (P {\times} Q)
/ \Delta(Q)$ is the restriction functor
to right $\cO Q$-modules from right
$\cO P$-modules.
\end{proof}

We shall also be needing the following part
of Linckelmann \cite[8.7.1]{Lin18}.

\begin{thm} \label{5.5}
{\rm (Linckelmann.)}
Given $P, Q \leq D$, then every
indecomposable direct summand of $B$,
as an $\cO P$-$\cO Q$-bimodule, is
isomorphic to $\cO(P {\times} Q)/
\Delta(\psi)$ for some $\cF$-isomorphism
$\psi$ to a subgroup of $P$ from a
subgroup of $Q$.
\end{thm}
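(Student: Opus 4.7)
The plan is to invoke the fact that $B$ is a $p$-permutation bimodule and then to identify the possible vertices of its indecomposable summands; since the theorem is credited to Linckelmann, one strategy in the paper itself would be simply to cite \cite[8.7.1]{Lin18}, but the underlying argument proceeds as follows.

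First, I would recall that $B = i (\cO G b) i$ for a source idempotent $i \in (\cO G b)^D$ with $\br_D(i) \neq 0$, and that $\cO G b$ is a $p$-permutation $\cO(G {\times} G)$-module under the biregular action (a $G$-stable $\cO$-basis is afforded by $\{ g b : g \in G \}$ after a routine reduction). Consequently $B$, as a summand, is a $p$-permutation $\cO(D {\times} D)$-module. Restriction along $P {\times} Q \leq D {\times} D$ preserves the $p$-permutation property, so every indecomposable $\cO P$-$\cO Q$-summand $N$ of $B$ is a $p$-permutation module on some transitive orbit of a $(P {\times} Q)$-stable basis.

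Second, a stabilizer analysis of such an orbit shows that the stabilizer of any basis element has the form $\Delta(\psi) = \{(\psi(z), z) : z \in R\}$ for some subgroup $R \leq Q$ and some group monomorphism $\psi : R \hookrightarrow P$; otherwise the basis element would not generate $N$ as a free $\cO P$-module or as a free $\cO Q$-module. Transitivity of the orbit then gives $N \cong \cO(P {\times} Q)/\Delta(\psi)$ as claimed, and a vertex of $N$ is $\Delta(\psi)$.

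The substantive step is to show that every such $\psi$ is an $\cF$-isomorphism, and this is where the source-algebra hypothesis plays its indispensable role. If $\Delta(\psi)$ is a vertex of a summand of $B$, then the Brauer quotient $B(\Delta(\psi))$ is nonzero, and one sees that the existence of an element $u \in B^{\Delta(\psi)}$ with $\br_{\Delta(\psi)}(u) \neq 0$ allows one to realise $\psi$ as the conjugation map associated to $u$ on local points, from which $\psi \in \cF$ follows via the characterisation of $\cF$ from the interior $D$-algebra structure of $B$ (cf.\ \cite[8.7]{Lin18}). The main obstacle is precisely this bridge between nonvanishing Brauer quotients at twisted diagonal subgroups and morphisms of the fusion system; once it is in hand, restricting $\psi$ to its image yields the desired $\cF$-isomorphism between subgroups of $Q$ and $P$, completing the argument.
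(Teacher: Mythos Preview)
The paper does not prove this theorem; it merely records it as ``the following part of Linckelmann \cite[8.7.1]{Lin18}'' and states it without argument. You anticipated this possibility correctly.

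Your sketch of the underlying proof is essentially the standard one. One small correction: the set $\{gb : g \in G\}$ is not an $\cO$-basis of $\cO G b$ (it is linearly dependent whenever $b \neq 1$). The clean way to say what you want is that $\cO G b$ is a direct summand of the permutation $\cO(G{\times}G)$-module $\cO G$, hence is $p$-permutation; then $B = i(\cO G b)i$ is a summand of $\cO G$ as an $\cO(D{\times}D)$-bimodule, so it is $p$-permutation and free on each side over $\cO D$. With that in place your stabilizer analysis is correct: the left and right freeness forces $S \cap (P{\times}1) = S \cap (1{\times}Q) = 1$ for the stabilizer $S$ of any basis element, so $S = \Delta(\psi)$ for a monomorphism $\psi$, and the nonvanishing of $B(\Delta(\psi))$ is exactly the source-algebra characterisation of $\psi$ lying in $\cF$.
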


We now introduce another block with the
same defect group $D$. Let $F$ be a finite
group, let $a$ be a block of $\cO F$ with
defect group $D$ and let $A$ be a source
$D$-algebra of the block algebra
$\cA = \cO F a$.

An $\cA$-$\cB$-bimodule $M$ is said to
induce a {\bf splendid Morita equivalence}
to $\cA$ from $\cB$ with respect to $A$ and
$B$ provided the following two conditions hold:

\tinyskip
\noin $\bullet$ $M$ and the dual $M^*$
induce a Morita equivalence between $\cA$
and $\cB$,

\tinyskip
\noin $\bullet$ $M$ is an indecomposable
direct summand of $\cO F 1_A
\otimes_{\cO D} 1_B \cO G$.

\tinyskip
\noin A theorem of Puig and Scott in
\cite[9.7.4]{Lin18} asserts that there is a
splendid Morita equivalence to $\cA$ from
$\cB$ with respect to $A$ and $B$ if and only
if there is an interior $D$-algebra isomorphism
$A \cong B$. We mention that the hypothesis
on the coefficient ring in \cite{Lin18} is slightly
different, but the proof in \cite{Lin18} carries
over, without change, to the case of arbitrary
$\cO$. Note that, when the two equivalent
conditions hold, the fusion system $\cF$
associated with $B$ is also the fusion system
associated with $A$.

\begin{thm} \label{5.6}
Suppose there is
an $\cA$-$\cB$-bimodule $M$ inducing a
splendid Morita equivalence to $\cA$ from
$\cB$ with respect to $A$ and $B$. Then
an $\cF$-identical isomorphism $\iota :
\cL \cP(A) \leftarrow \cL \cP(B)$ is given
by $\Dia(P_{\iota_P(\gamma)}) \cong
M \otimes_\cB \Dia(P_\gamma)$
for any $\cL \cP(B)$-object $P_\gamma$.
\end{thm}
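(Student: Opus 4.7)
The plan is to invoke Proposition \ref{5.2} twice --- once for $\cB$ and once for $\cA$ --- to re-encode $\cL \cP(B)$-points and $\cL \cP(A)$-points of each $P \leq D$ as isomorphism classes of suitable indecomposable diagonal bimodules, and then to show that the Morita functor $M \otimes_\cB {-}$ sets up the required identification. Given an $\cL \cP(B)$-object $P_\gamma$, I will set $N = M \otimes_\cB \Dia(P_\gamma)$ as an $\cA$-$\cO P$-bimodule; the map $\iota_P$ is then defined by demanding that $\iota_P(\gamma)$ be the unique local point of $P$ on $\cA$, with $P_{\iota_P(\gamma)}$ overshadowed by $A$, for which $\Dia(P_{\iota_P(\gamma)}) \cong N$.

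The central verification is that $N$ satisfies the three conditions of Proposition \ref{5.2} applied to $A$. Indecomposability of $N$ is immediate, since $M \otimes_\cB {-}$ extends to an equivalence between $\cB$-$\cO P$-bimodules and $\cA$-$\cO P$-bimodules by tensoring the Morita equivalence with $\cO P$ on the right. For the vertex and block-idempotent conditions, I will use that $M$, as $\cO(F {\times} G)$-module, is a trivial source module with vertex $\Delta(D)$, so that upon choosing $i \in \gamma$ the bimodule $N = M i$ is a direct summand of the $p$-permutation $\cO(F {\times} P)$-module $\cO F 1_A \otimes_{\cO D} \Dia(P_\gamma)$. The standard formula for Brauer constructions of tensor products of $p$-permutation bimodules yields
$$N(\Delta(P)) \cong M(\Delta(P)) \otimes_{\FF C_G(P) e_P^B} \Dia(P_\gamma)(\Delta(P)) \; ,$$
where $e_P^B$ is the block of $\FF C_G(P)$ attached to $B$ and, analogously, $e_P^A$ is the block of $\FF C_F(P)$ attached to $A$. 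The splendid compatibility $M(\Delta(P)) = e_P^A M(\Delta(P)) e_P^B$, which follows from $M$ being a summand of $\cO F 1_A \otimes_{\cO D} 1_B \cO G$, shows that $N(\Delta(P))$ is nonzero and that $e_P^A$ acts on it as the identity; this forces the vertex of $N$ to contain a conjugate of $\Delta(P)$, while Lemma \ref{5.1} and the embedding into the $p$-permutation bimodule above forbid vertices strictly larger than $\Delta(P)$, so the vertex equals $\Delta(P)$ exactly. Proposition \ref{5.2} for $\cA$ then yields $\iota_P(\gamma)$.

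Bijectivity of $\iota_P$ comes from the inverse construction using the dual bimodule: the isomorphisms $M^* \otimes_\cA M \cong \cB$ and $M \otimes_\cB M^* \cong \cA$ make $\iota_P^{-1}$ well-defined and two-sided inverse. Preservation of morphisms reduces, via Lemma \ref{5.3} and associativity of tensor product, to
$$M \otimes_\cB \Dia({}^\phi(P_\gamma)) \cong M \otimes_\cB \Dia(P_\gamma) \otimes_{\cO P} \cO(P {\times} \phi(P))/\Delta(\phi^{-1}) \cong \Dia({}^\phi(P_{\iota_P(\gamma)})) \; ,$$
whence uniqueness in Proposition \ref{5.2} gives ${}^\phi \iota_P(\gamma) = \iota_{\phi(P)}({}^\phi \gamma)$. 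Preservation of multiplicities follows at once from Lemma \ref{5.4}, since the equivalence $M \otimes_\cB {-}$ preserves multiplicities of indecomposable summands in tensor products of bimodules.

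The main obstacle is the bimodule analysis in the second paragraph: specifically, pinning down the precise vertex of $N$ and extracting the block compatibility $M(\Delta(P)) = e_P^A M(\Delta(P)) e_P^B$ from the splendidness hypothesis. Both ingredients are standard in the theory of splendid equivalences but need to be handled carefully here, where the two source algebras $A$ and $B$ jointly govern which blocks of $\FF C_F(P)$ and $\FF C_G(P)$ appear across the Morita bridge provided by $M$.
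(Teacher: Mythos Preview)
Your approach differs genuinely from the paper's. You work ``from above,'' invoking the standard package surrounding splendid equivalences: the tensor-product formula for Brauer constructions and the fact that $M(\Delta(P))$ induces a local Morita equivalence between $\FF C_F(P) e_P^A$ and $\FF C_G(P) e_P^B$. The paper instead works ``from below'' and never touches that formula. It takes $L = M \otimes_\cB \Dia(P_\gamma)$, observes $L \smid \cO F 1_A \otimes_{\cO D} W$ for some indecomposable summand $W$ of $1_B \cO Gi$, applies Lemma \ref{5.1} to $M^* 1_A \otimes_{\cO D} W$ to pin down $W \cong \cO(D \times P)/\Delta(\psi)$, uses Theorem \ref{5.5} to see that $\psi$ lies in $\cF$, and then twists by $I^\circ = \cO(P \times \psi(P))/\Delta(\psi)$ to realise $L \otimes_{\cO P} I^\circ$ as a direct summand of $\cO F 1_A$, hence as $\cO F h'$ for an explicit primitive idempotent $h' \in A^{\psi(P)}$. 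Locality of the resulting point is checked via Green's Indecomposability Criterion. The paper's route is more hands-on but entirely self-contained within its own lemmas; yours is more conceptual but imports machinery from outside the paper.

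There is, however, a gap in your argument. Proposition \ref{5.2}(b) requires $N \smid \cA$ as an $\cA$-$\cO P$-bimodule, and you never verify this: you check indecomposability, vertex $\Delta(P)$, and the condition $e_P^A N(\Delta(P)) = N(\Delta(P))$, but none of these by itself forces $N$ to occur inside $\cA$. What you actually know is $N \smid M$ as $\cA$-$\cO P$-bimodules (since $\Dia(P_\gamma) \smid \cB$), hence $N \smid \cO F 1_A \otimes_{\cO D} 1_B \cO G$, which is not $\cA$. To close the gap you need Brou\'{e}'s parametrisation of indecomposable $p$-permutation modules: since $N$ has trivial source and vertex $\Delta(P)$, it is determined up to isomorphism by $N(\Delta(P))$; as the latter is a projective indecomposable $\FF C_F(P) e_P^A$-module, there is a local point $\alpha$ of $P$ on $\cA$ with $\Dia(P_\alpha)(\Delta(P)) \cong N(\Delta(P))$, whence $N \cong \Dia(P_\alpha) \smid \cA$. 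This is precisely the content of the paper's explicit construction of $h'$, achieved there without appeal to the parametrisation. A smaller point: your invocation of Lemma \ref{5.1} for the upper bound on the vertex is misplaced --- that lemma produces a $\phi$ with nonzero Brauer construction, not a vertex bound. The bound you actually want follows directly from $N \smid {}_{F \times P}\Res{}_{F \times G}(M)$ together with Mackey.
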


\begin{proof}
Fix an $\cL \cP(B)$-object $P_\gamma$
and let
$$L = M \otimes_\cB \Dia(P_\gamma)$$
as an $\cA$-$\cO P$-bimodule. We shall
show that there exists a point $\alpha$ of
$P$ on $\cA$ such that $P_\alpha$ is a
$\cL \cP(A)$-object and $L \cong
\Dia(P_\alpha)$. Since $\Dia(P_\gamma)
\cong M^* \otimes_\cA L$, it will then follow,
by Proposition \ref{5.2}, that the function
$P_\alpha \mapsfrom P_\gamma$ is a
bijection to the set of $\cL \cP(A)$-objects
from the set of $\cL \cP(B)$-objects.

The functor $M \otimes_\cB \dash$ is a
Morita equivalence to $\cA \otimes_\cO
\cO P$ from $\cB \otimes_\cO \cO P$, so
$L$ is indecomposable. Letting
$i \in \gamma$, we have $\Dia(P_\gamma)
\cong \cO G i$, hence $L \cong M i$. So
$L \smid \cO F 1_A \otimes_{\cO D} W$
for some indecomposable direct summand
$W$ of the $\cO D$-$\cO P$-bimodule
$1_B \cO G i$. Therefore,
$$\Dia(P_\gamma) \cong
  M^* \otimes_\cA L \smid
  M^* 1_A \otimes_{\cO D} W \; .$$
Since $\Dia(P_\gamma)$ has vertex
$\Delta(P)$, we have $\Dia(P_\gamma)
(\Delta(P)) \neq 0$, so $(M^* 1_A
\otimes_{\cO D} W) (\Delta(P)) \neq 0$.
By Lemma \ref{5.1}, $W(\Delta(\psi))
\neq 0$ for some monomorphism
$\psi : D \leftarrow P$. Since $W$ is
indecomposable, $W \cong \cO(D
{\times} P)/\Delta(\psi)$. By Theorem
\ref{5.5}, $\psi$ is an $\cF$-morphism.
Let $I = \cO(\psi(P) {\times} P)/\Delta(\psi)$
as an $\cO \psi(P)$-$\cO P$-bimodule.
Writing $I^\circ$ for the opposite
bimodule of $I$, we have
$$W \otimes_{\cO P} I^\circ \cong
  \cO(D {\times} \psi(P)) /
  \Delta(\psi(P)) \; .$$
So $\dash \otimes_{\cO D} W
\otimes_{\cO P} I^\circ$ is the restriction
functor to right $\cO \psi(P)$-modules
from right $\cO D$-modules. Therefore,
$$L \otimes_{\cO P} I^\circ \smid
  \cO F 1_A \otimes_{\cO D} W
  \otimes_{\cO P} I^\circ \cong \cO F 1_A$$
as $\cA$-$\cO \psi(P)$-bimodules. Since
$L \otimes_{\cO P} I^\circ$ is
indecomposable, there exists a
primitive idempotent $h'$ of
$A^{\psi(P)}$ such that
$$L \otimes_{\cO P} I^\circ
  \cong \cO F h'$$
as $\cA$-$\cO \psi(P)$-bimodules. Let
$f \in F$ such that $\psi$ is conjugation
by $f$. Let $h$ be the primitive
idempotent of $A^P$ such that
$h' = {}^f h$. Let $\alpha$ be the point
of $P$ on $\cA$ owning $h$. We have
$$L \cong L \otimes_{\cO P} I^\circ
  \otimes_{\cO \psi(P)} I \cong
  \cO F h' \otimes_{\cO \psi(P)}
  \cO \psi(P) f P \cong \cO F h
  \cong \Dia(P_\alpha) \; .$$

To show that $\alpha$ is local, let
$Q$ be a defect group of $\alpha$.
As an $\cO(F {\times} P)$-module,
$L$ is isomorphic to a direct summand
of a module induced from $F {\times} Q$.
By Green's indecomposability Criterion,
$$L \cong {}_{F {\times} P} \Ind
  {}_{F \times Q}(K) \cong
  K \otimes_{\cO Q} \cO P$$
for some $\cA$-$\cO Q$-bimodule $K$.
We have
$$\Dia(P_\gamma) \cong M^* \otimes_\cA L
  \cong M^* \otimes_\cA K \otimes_{\cO Q}
  \cO P \cong {}_{G \times P} \Ind
  {}_{G \times Q} (M^* \otimes_\cA K) \; .$$
But $\Dia(P_\gamma)$ has vertex
$\Delta(P)$, so $Q = P$ and $\alpha$ is
local.

To show that $P_\alpha$ is overshadowed
by $A$, let $\alpha'$ be the point of
$\psi(P)$ on $\cA$ owning $h'$. Since
$h' \in A^{\psi(P)}$, we have
$\psi(P)_{\alpha'} \in (\psi(P), e_{\psi(P)})$.
But $\psi(P)_{\alpha'} = {}^f (P_\alpha)$
and $(\psi(P), e_{\psi(P)}) = {}^f (P, e_P)$,
so $P_\alpha \in (P, e_P)$, in other words,
$P_\alpha$ is overshadowed by $A$.

We have now established that $P_\alpha$
is an $\cL \cP(A)$-object. It remains to show
that $\iota_P$ preserves fusions and
multiplicities. Let $\phi$ be an
$\cF$-morphism with domain $P$. Lemma
\ref{5.3} tells us that, applying the functor
$\dash \otimes_{\cO P} (P {\times} \phi(P))
/ \Delta(\phi^{-1})$, we have
$$\Dia({}^\phi (P_\alpha)) \cong
  M \otimes \Dia({}^\phi(P_\gamma)) \; .$$
So ${}^\phi \alpha = \iota_{\phi(P)}
({}^\phi \gamma)$ and $\iota$ preserves
fusions. Let $Q \leq P$ and let $\delta$
be an $\cL \cP(B)$-point of $Q$. Let $\beta$
be the $\cL \cP(A)$-point of $Q$ such that
$\Dia(Q_\beta) \cong M \otimes_\cB
\Dia(Q_\delta)$. Since $M \otimes_\cB \dash$
is a Morita equivalence to $\cA \otimes_\cO
\cO Q$ from $\cB \otimes_\cO \cO Q$,
Lemma \ref{5.4} yields $m(Q_\beta, P_\alpha)
= m(Q_\delta, P_\gamma)$. So $\iota$
preserves multiplicities.
\end{proof}

An $\cA$-$\cB$-bimodule $M$ is said to
induce a {\bf splendid stable equivalence
of Morita type} to $\cA$ from $\cB$ with
respect to $A$ and $B$ provided the
following two conditions hold:

\tinyskip
\noin $\bullet$ $M$ and the dual $M^*$
induce a stable equivalence of Morita type
between $\cA$ and $\cB$,

\tinyskip
\noin $\bullet$ $M$ is an indecomposable
direct summand of $\cO F 1_A
\otimes_{\cO D} 1_B \cO G$.

\tinyskip
\noin By \cite[9.8.2]{Lin18}, when such
$M$ exists, the fusion system associated
with $A$ is $\cF$.

\begin{thm} \label{5.7}
Suppose there is an $\cA$-$\cB$-bimodule
$M$ inducing a splendid stable equivalence
of Morita type to $\cA$ from $\cB$ with respect
to $A$ and $B$. Then there is an $\cF$-identical
isomorphism $\oo{\iota} : \oo{\cL \cP}(A)
\leftarrow \oo{\cL \cP}(B)$ such that
$\Dia(P_{\oo{\iota}_P(\gamma)})$ is the
non-projective part of $M \otimes_\cB
\Dia(P_\gamma)$ for any
$\oo{\cL \cP}(B)$-object $P_\gamma$.
\end{thm}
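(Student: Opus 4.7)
The plan is to adapt the proof of Theorem \ref{5.6}, now working modulo projective summands. The essential new observation is that for any $\oo{\cL \cP}(B)$-object $P_\gamma$, since $P > 1$, the diagonal module $\Dia(P_\gamma) = \cO G i$ has non-trivial vertex $\Delta(P)$ in $G \times P$; moreover, $\Dia(P_\gamma)$ is projective as a right $\cO P$-module since $\cO G i$ is a direct summand of the free right $\cO P$-module $\cO G$. As a consequence, the stable equivalence of Morita type $M \otimes_\cB \dash$ extends to the categories of $\cA$-$\cO P$-bimodules and $\cB$-$\cO P$-bimodules (the error term $Y \otimes_\cB \dash$ in $M^* \otimes_\cA M \otimes_\cB \dash$ remains projective as a bimodule because tensoring a projective $\cB$-$\cB$-bimodule with a bimodule projective on the right over $\cO P$ is projective as a $\cB$-$\cO P$-bimodule), and it yields a canonical decomposition
$$M \otimes_\cB \Dia(P_\gamma) \cong L' \oplus X,$$
with $L'$ indecomposable non-projective and $X$ projective as an $\cA$-$\cO P$-bimodule.

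The identification of $L'$ with a diagonal module $\Dia(P_\alpha)$ for a local point $\alpha$ of $P$ on $\cA$ proceeds by the same argument as in Theorem \ref{5.6}: since $M$ is a summand of $\cO F 1_A \otimes_{\cO D} 1_B \cO G$, we have $L' \smid \cO F 1_A \otimes_{\cO D} W$ for some indecomposable $\cO D$-$\cO P$-summand $W$ of $1_B \cO G i$; Lemma \ref{5.1} together with Theorem \ref{5.5} produces an $\cF$-morphism $\psi : D \leftarrow P$ with $W \cong \cO(D \times P)/\Delta(\psi)$; and the Green indecomposability argument, applied using that the non-projective part of $M^* \otimes_\cA L'$ is $\Dia(P_\gamma)$ (with vertex $\Delta(P)$), forces the defect group of $\alpha$ to be $P$ itself. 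Overshadowing of $P_\alpha$ by $A$ follows as in Theorem \ref{5.6}, giving the bijection $\oo{\iota}_P$. Preservation of $\cF$-fusions is then obtained from Lemma \ref{5.3}: the right-tensor functor $\dash \otimes_{\cO P} \cO(P \times \phi(P))/\Delta(\phi^{-1})$ commutes with $M \otimes_\cB \dash$, and comparing the non-projective parts of both sides of the resulting isomorphism gives $\oo{\iota}_{\phi(P)}({}^\phi \gamma) = {}^\phi \oo{\iota}_P(\gamma)$.

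The main obstacle is preservation of multiplicities. Given $\oo{\cL \cP}(B)$-objects $Q_\delta \leq P_\gamma$ (so $Q > 1$), Lemma \ref{5.4} expresses $m(Q_\delta, P_\gamma)$ as the multiplicity of $\Dia(Q_\delta)$ in $\Dia(P_\gamma) \otimes_{\cO P} \cO(P \times Q)/\Delta(Q)$. The right-restriction functor commutes with $M \otimes_\cB \dash$ and sends projective $\cA$-$\cO P$-bimodules to projective $\cA$-$\cO Q$-bimodules, since $\cO P$ is free as a right $\cO Q$-module. Because $\Dia(Q_{\oo{\iota}_Q(\delta)})$ is non-projective (as $Q > 1$), it cannot appear as a summand of any projective bimodule, so its multiplicity in $M \otimes_\cB \bigl(\Dia(P_\gamma) \otimes_{\cO P} \cO(P \times Q)/\Delta(Q)\bigr)$ equals its multiplicity in $\Dia(P_{\oo{\iota}_P(\gamma)}) \otimes_{\cO P} \cO(P \times Q)/\Delta(Q)$. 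Combined with the stable-equivalence bijection between non-projective indecomposable bimodules (under which $\Dia(Q_\delta) \leftrightarrow \Dia(Q_{\oo{\iota}_Q(\delta)})$), this yields
$$m(Q_\delta, P_\gamma) = m\bigl(\Dia(Q_{\oo{\iota}_Q(\delta)}),\, \Dia(P_{\oo{\iota}_P(\gamma)}) \otimes_{\cO P} \cO(P \times Q)/\Delta(Q)\bigr) = m(Q_{\oo{\iota}_Q(\delta)}, P_{\oo{\iota}_P(\gamma)}),$$
completing the proof.
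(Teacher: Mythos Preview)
Your proof is correct and follows essentially the same approach as the paper's own proof. The only presentational difference is that you package the argument by first observing that $M \otimes_\cB \dash$ extends to a stable equivalence between $\cA$-$\cO P$- and $\cB$-$\cO P$-bimodule categories (using right $\cO P$-projectivity of the diagonal modules), whereas the paper carries out the same computations directly without naming this extension; the identification of the non-projective summand via Lemma~\ref{5.1}, Theorem~\ref{5.5}, and Green's criterion, and the verification of fusions and multiplicities via Lemmas~\ref{5.3} and~\ref{5.4}, are the same in both.
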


\begin{proof}
Write $M^* \otimes_\cA M
\cong \cB \oplus Y$ where $Y$ is a
projective $\cB$-$\cB$-module. Fix an
$\oo{\cL \cP}(B)$-object $P_\gamma$.
We have a direct sum of
$\cA$-$\cO P$-bimodules
$$L \oplus L' \cong M \otimes_\cB
  \Dia(P_\gamma)$$
where $L$ is indecomposable and
non-projective while $L'$ is projective.
Letting $i \in \gamma$, then
$\Dia(P_\gamma) \cong \cO G i$ and
$$M^* \otimes_\cA L \oplus M^*
  \otimes_\cA L' \cong (\cB \oplus Y)
  \otimes_\cB \Dia(P_\gamma) \cong
  \Dia(P_\gamma) \oplus Yi \; .$$
Since $M^* \otimes_\cA L'$ and $Yi$ are
projective, $\Dia(P_\gamma)$ is the
non-projective part of $M^* \otimes_\cA L$.
We shall show that there exists an
$\oo{\cL \cP}(A)$-object $P_\alpha$ such
that $L \cong \Dia(P_\alpha)$. It will then
follow, by Proposition \ref{5.2}, that there
is a bijective correspondence $P_\alpha
\leftrightarrow P_\gamma$ between
the $\oo{\cL \cP}(A)$-objects and the
$\oo{\cL \cP}(B)$-objects.

We have $L \oplus L' \cong Mi$. So
$L \smid \cO F 1_A \otimes_{\cO D} W$
for some indecomposable
$\cO D$-$\cO P$-bimodule $W$ such
that $W \smid 1_B \cO G i$. Since
$\Dia(P_\gamma)$ is the
non-projective part of
$M^* \otimes_\cA L$, we have
$$(M^* \otimes_\cA L)(\Delta(P))
  \cong (\Dia(P_\gamma))(\Delta(P))
  \neq 0 \; .$$
But $M^* \otimes_\cA L \smid
M^* \otimes_\cA \cO F 1_A
\otimes_{\cO D} W \cong
M^* 1_A \otimes_{\cO D} W$. So
$$(M^* 1_A \otimes_{\cO D} W)
  (\Delta(P)) \neq 0 \; .$$

The next stage of the argument proceeds
much as in the proof of Theorem \ref{5.6}.
Let us summarize it. Again, we find that
$W \cong \cO(D {\times} P)/\Delta(\psi)$
for some $\cF$-morphism $\psi$. Letting
$I$ be as before, we find that there exists a
primitive idempotent $h'$ of $A^{\psi(P)}$
such that $L \otimes_{\cO P} I^\circ
\cong \cO F h'$. By considering $f$,
$h$, $\alpha$ as before, we deduce that
$L \cong \Dia(P_\alpha)$.

To show that $\alpha$ is local, adaptation
of the analogous argument in the proof
of Theorem \ref{5.6} requires some care.
We again use Green's Indecomposability
Criterion to show that $L \cong K
\otimes_{\cO Q} \cO P$ where $K$ is
an $\cA$-$\cO Q$-bimodule and $Q$
is a defect group of $\alpha$. Again,
$$M^* \otimes_\cA L \cong
  {}_{G \times P} \Ind {}_{G \times Q}
  (M^* \otimes_\cA K) \; .$$
But we saw above that
$(M^* \otimes_\cA L)(\Delta(P)) \neq 0$.
So $Q = P$ and $\alpha$ is local. To show
that $P_\alpha$ is overshadowed by $A$
the argument is very similar to what we
did before.

Thus, we have established that $P_\alpha$
is an $\oo{\cL \cP}(A)$-object, and it
remains only to check preservation of
fusions and multiplicities. Let $\phi$ be
as before. Write $J = \cO(P {\times}
\phi(P))/\Delta(\phi^{-1})$. Applying the
functor $\dash \otimes_{\cO P} J$ to the
isomorphism $\Dia(P_\alpha) \oplus L'
\cong M \otimes_\cB \Dia(P_\gamma)$
and using Lemma \ref{5.3}, we obtain
$$\Dia({}^\phi(P_\alpha)) \oplus L'
  \otimes_{\cO P} J \cong M \otimes_\cB
  \Dia({}^\phi (P_\gamma)) \; .$$
Since $L' \otimes_{\cO P} J$ is a
projective $\cA$-$\cO \phi(P)$-bimodule,
${}^\phi \alpha = \oo{\iota}_{\phi(P)}
({}^\phi \gamma)$ and $\oo{\iota}$
preserves fusions.

To show that $\oo{\iota}$ preserves
multiplicities, let $Q$ be a non-trivial
subgroup of $P$, let $\delta$ be an
$\oo{\cL \cP}(B)$-point of $Q$ and let
$\beta$ be the $\oo{\cL \cP}(A)$-point
of $Q$ such that
$$\Dia(Q_\beta) \oplus N \cong
  M \otimes_\cB \Dia(Q_\delta)$$
for some projective
$\cB$-$\cO Q$-bimodule $N$. Since
$L' \otimes_{\cO P} \cO(P {\times} Q)
/ \Delta(Q)$ is a projective
$\cA$-$\cO Q$-bimodule, Lemma
\ref{5.4} yields
$$m(Q_\beta, P_\alpha) =
  m(\Dia(Q_\beta), M \otimes_\cB
  \Dia(P_\alpha) \otimes_{\cO P}
  \cO(P {\times} Q)/\Delta(Q)) \; .$$
The non-projective part of
$M^* \otimes_\cA \Dia(Q_\beta)$ is
$\Dia(Q_\delta)$ and the non-projective
part of $M^* \otimes_\cA M \otimes_\cB
\Dia(P_\gamma) \otimes_{\cO P}
\cO(P {\times} Q)/\Delta(Q)$ is
isomorphic to the non-projective part
of $\Dia(P_\gamma) \otimes_{\cO P}
(P {\times} Q)/\Delta(Q)$, so
$$m(Q_\beta, P_\alpha) =
  m(\Dia(Q_\delta), \Dia(P_\gamma)
  \otimes_{\cO P} \cO (P {\times} Q)
  /\Delta(Q)) \; .$$
By Lemma \ref{5.4} again, $m(Q_\beta,
P_\alpha) = m(Q_\delta, P_\gamma)$.
So $\oo{\iota}$ preserves
multiplicities.
\end{proof}

\section{Klein-four defect groups}
\label{6}

Once again, let $G$, $b$, $D$, $B$, $\cB$
be as in Section 2. In the case where $p = 2$
and $D \cong V_4$, we shall describe all the
possibilities for the underlying multiposet of the
pointed fusion system $\cL \cP = \cL \cP(B)$. This
will be an application of the following theorem of
Craven--Eaton--Kessar--Linckelmann
\cite[1.1]{CEKL11}. Their proof of the theorem
relies on the classification of simple finite groups.

\begin{thm} \label{6.1}
{\rm (Craven--Eaton--Kessar--Linckelmann.)}
{\it Supposing $p = 2$ and $D \cong V_4$
then, as an interior $D$-algebra, $B$ is
isomorphic to $\cO D$ or $\cO A_4$ or the
principal block algebra of $\cO A_5$. In the
latter two cases, $D$ is identified with a
Sylow $2$-subgroup of $A_4$ or $A_5$.}
\end{thm}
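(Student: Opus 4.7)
The plan is to reduce the classification of the interior $D$-algebra structure of $B$ to (i) classifying saturated fusion systems on $D$, and (ii) a rigidity statement matching each fusion system to exactly one interior $D$-algebra on the list. Since $\Aut(V_4) \cong S_3$, a saturated fusion system $\cF$ on $D$ is determined by the automizer $\Aut_\cF(D)$, and up to $\Aut(D)$-conjugacy the possibilities are exactly $1$, $C_3$, and $S_3$. These match the fusion systems of the principal $2$-block of $D$, of $A_4$, and of $A_5$, respectively. So there are at most three cases to treat, and the target interior $D$-algebras $\cO D$, $\cO A_4$, and the principal block of $\cO A_5$ are distinguished by their fusion systems; it suffices to show each fusion system forces $B$ to be isomorphic to the corresponding one.

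The case $\Aut_\cF(D) = 1$ is the nilpotent case, and here Puig's structure theorem on nilpotent blocks gives directly $B \cong \cO D$ as interior $D$-algebras. This case requires no classification input.

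For the two non-nilpotent fusion systems, the approach is the standard reduction to quasi-simple groups followed by an appeal to CFSG. One would first use Fong--Reynolds style reductions, followed by Külshammer--Puig reductions through normal subgroups and central $p'$-extensions, to pass to a situation where $G$ is quasi-simple and $B$ is the source algebra of a $2$-block with defect group $V_4$. The classification of finite simple groups then provides the finite list of quasi-simple groups whose relevant $2$-blocks have $V_4$ defect -- principally the groups $\PSL_2(q)$ with $q \equiv 3, 5 \pmod 8$, plus $A_7$ and a few sporadic and alternating cases -- and for each one checks by direct computation that the source algebra is $\cO A_4$ when $\Aut_\cF(D) = C_3$ and the principal block of $\cO A_5$ when $\Aut_\cF(D) = S_3$.

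The main obstacle is the verification step on the quasi-simple list. A Morita equivalence between the block and the target algebra is not, by itself, enough; one must upgrade it to an interior $D$-algebra isomorphism. For this one would combine Erdmann's classification of tame block algebras (which pins down the Morita type of any $V_4$-defect block) with source-algebra rigidity results of Linckelmann for blocks whose basic algebra is known, thereby promoting the Morita-equivalence statement to the required interior $D$-algebra isomorphism. Making the CFSG input genuinely finite, and controlling the source algebra rather than just the basic algebra, is the technical core of the CEKL proof and is where the delicate work lies.
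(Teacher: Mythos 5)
First, a point of calibration: the paper does not prove Theorem \ref{6.1} at all. It is quoted from Craven--Eaton--Kessar--Linckelmann \cite[1.1]{CEKL11}, and the text explicitly records that their proof relies on the classification of finite simple groups. So there is no internal argument to compare yours against; what can be assessed is whether your sketch is a viable outline of the external proof.

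It is not, because the case division you build it on is wrong. For $p = 2$ and $D \cong V_4$, the saturation axioms force $\Aut_\cF(D)$ to have odd order (since $D$ is abelian, $\Aut_D(D) = \Inn(D) = 1$ must be a Sylow $2$-subgroup of $\Aut_\cF(D)$), so the only saturated fusion systems on $D$ are the trivial one and the one with automizer $C_3$; there is no saturated fusion system on $V_4$ with automizer $S_3$. Worse, $\cO A_4$ and the principal block of $\cO A_5$ realize the \emph{same} fusion system, because $N_{A_5}(D)/C_{A_5}(D) \cong A_4/V_4 \cong C_3$. Hence your step (ii) --- that the three target interior $D$-algebras are distinguished by their fusion systems --- is false, and the reduction collapses exactly where the theorem is difficult. (This is the phenomenon the paper itself emphasizes in Section \ref{1}: the fusion system is too coarse an invariant, which is the motivation for introducing pointed fusion systems.) Your nilpotent case via Puig's theorem is fine, and the outer shell --- Fong--Reynolds and K\"ulshammer--Puig reductions to quasi-simple groups, CFSG, then input from Erdmann and Linckelmann --- does resemble the shape of the actual CEKL argument. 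But the decisive technical step there is different from what you describe: one proves that every block with Klein-four defect group has a simple module with trivial source, which is precisely the hypothesis under which Linckelmann's earlier work already identifies the source algebra as one of the three listed; separating $\cO A_4$ from the $A_5$ block is then done by invariants finer than the fusion system (Cartan matrices, dimensions), not by the fusion system itself.
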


Suppose $p = 2$ and $D \cong V_4$. By
Theorem \ref{6.1}, together with Theorem
\ref{5.6}, we shall have described all the
possible multiposet structures for $\cL \cP$
when we have done so in the three cases
where $G \in \{ D, A_4, A_5 \}$ and $b$ is
the principal block of $\cO G$.

Let $X$, $Y$, $Z$ be the proper subgroups
of $D$. For any $R \in \{ X, Y, Z, D \}$, we
have $C_G(R) = D$. So there exists a unique
local point $\gamma^R$ of $R$ on $\cB$. We
write $R_1 = R_{\gamma^R}$. Let $V_1$,
$...$ be representatives of the isomorphism
classes of simple $\cB$-modules, enumerated
such that $V_1$ is trivial. Let $\gamma_i$ be
the point on $\cB$ that does not annihilate
$V_i$. We write $1_i = 1_{\gamma_i}$.
Thus, every $\cL \cP$-object has the form
$R_1$ or $1_i$. We shall show that, in the
three cases where $G$ is $D$, $A_4$, $A_5$,
respectively, the multiposet structure of
$\cL \cP$ is as shown, where the double
lines indicate multiplicity $2$ and all the
other multiplicities are $1$.

\smallskip
\hspace{0.4in}
\begin{picture}(400,76)


\put(33,7){$1_1$}
\put(8,37){$X_1$}
\put(30,37){$Y_1$}
\put(50,37){$Z_1$}
\put(30,67){$D_1$}

\curve(30,18,17,31)
\curve(35,18,35,31)
\curve(40,18,53,31)

\curve(17,49,30,61)
\curve(35,49,35,61)
\curve(53,49,40,61)


\put(112,7){$1_1$}
\put(143,7){$1_2$}
\put(173,7){$1_3$}
\put(108,37){$X_1$}
\put(140,37){$Y_1$}
\put(170,37){$Z_1$}
\put(140,67){$D_1$}

\curve(115,18,115,31)
\curve(118,17,141,32)
\curve(121,16,170,33)

\curve(140,17,118,32)
\curve(145,18,145,31)
\curve(150,17,173,32)

\curve(170,16,121,33)
\curve(175,17,149,32)
\curve(176,18,176,31)

\curve(117,49,140,61)
\curve(145,49,145,61)
\curve(173,49,150,61)


\put(253,7){$1_1$}
\put(228,37){$X_1$}
\put(250,37){$Y_1$}
\put(270,37){$Z_1$}
\put(310,67){$D_1$}
\put(300,7){$1_2$}
\put(329,7){$1_3$}

\curve(250,18,237,31)
\curve(255,18,255,31)
\curve(260,18,273,31)

\curve(236,49,304,67)
\curve(255,49,306,65)
\curve(273,49,308,63)

\curve(301,18,311,62)
\curve(304,18,314,62)

\curve(330,18,320,62)
\curve(333,18,323,62)

\end{picture}

For the rest of this paper, we regard
$\cL \cP$ as a multiposet. Plainly, if
$G = D$, then $\cL \cP$ is as depicted
in the left-hand diagram. The remaining
two cases share some common features.
Henceforth, suppose that $G = A_4$ or
$G = A_5$ and let $b$ be the principal
block of $\cO G$. By Theorem \ref{6.1}
(or an easy direct argument which we omit),
$B = \cB$. The points on $\cB$ are the
$\gamma_i$ with $i \in \{ 1, 2, 3 \}$. Let
$E_i$ be the indecomposable projective
$\cB$-module, well-defined up to
isomorphism, such that $V_i \cong
E_i / J(E_i)$. Transporting via the
isomorphism $G {\times} 1 \cong G$,
we have
$$\Dia(1_i) \cong {}_{G \times 1}
  \Iso {}_G (E_i) \; .$$
To proceed further, we consider the
two cases separately.

Suppose $G = A_4$. Then $\cO G \cong
E_1 \oplus E_2 \oplus E_3$ as
$\cO G$-modules. So
$$\cO G \cong \Dia(1_1) \oplus
  \Dia(1_2) \oplus \Dia(1_3)$$
as $\cO(G {\times} 1)$-modules. We also
have $\cO G \cong \Dia(X_1) \oplus E$ as
$\cO(G {\times} X)$-modules, where $E$
is projective. Restricting via the
embedding $G \cong G {\times} 1
\hookrightarrow G {\times} X$, we have
${}_G \Res {}_{G {\times} X}(E) \smid E_1
\oplus E_2 \oplus E_3$. But every projective
$\cO(G {\times} X)$-module restricts to a
direct sum of $2$ isomorphic copies of a
projective $\cO G$-module. Therefore,
$E = 0$, that is, $\Dia(X_1) \cong \cO G$
as $\cO(G {\times} X)$-modules and
$${}_{G \times 1} \Res {}_{G \times X}
  (\Dia(X_1)) \cong \Dia(1_1) \oplus
  \Dia(1_2) \oplus \Dia(1_3) \; .$$
It follows that $\Dia(D_1) \cong \cO G$
as $\cO(G {\times} D)$-modules and
$${}_{G \times X} \Res {}_{G \times D}
  (\Dia(D_1)) \cong \Dia(X_1) \; .$$
Bearing in mind that the subgroups
$X$, $Y$, $Z$ are $G$-conjugate, we
deduce, using Lemma \ref{5.4}, that
$\cL \cP$ is as depicted in the
middle diagram above.

Now suppose $G = A_5$. Let
$X < H < G$ with $H \cong D_{10}$, the
dihedral group of order $10$. We have
$\cO H \cong L \oplus L_0$ as
$\cO(H {\times} X)$-modules, where
$L_0$ is projective and $L$ is
indecomposable with vertex $\Delta(X)$
and $\cO$-rank $\rk_\cO(L) = 2$.
Since $\Dia(X_1)$ has vertex
$\Delta(X)$ and
$$\Dia(X_1) \smid \cO G \cong
  {}_{G {\times} X} \Ind
  {}_{H {\times} X} (\cO H)$$
we have $\Dia(X_1) \smid {}_{G \times X}
\Ind {}_{H {\times} X} (L)$. Therefore,
$\rk_\cO(\Dia(X_1)) \leq 12$. For
$i \in \{ 1, 2, 3 \}$, inducing via the
embedding $G {\times } X \hookleftarrow
G {\times} 1 \cong G$, let $E_i^X =
{}_{G \times X} \Ind {}_G (E_i)$.
Then $E_1^X$, $E_2^X$, $E_3^X$
comprise a set of representatives of
the isomorphism classes of
indecomposable projective
$\cO(G {\times} X)$-modules. Since
$\rk_\cO(E_1) = 12$ and
$\rk_\cO(E_2) = \rk_\cO(E_3) = 8$,
we have $\rk_\cO(E_1^X) = 24$ and
$\rk_\cO(E_2^X) = \rk_\cO(E_3^X)
= 16$. Now
$$\cB \cong \Dia(X_1) \oplus E$$
as $\cO(G {\times} X)$-modules, where
$E$ is projective. We have
$\rk_\cO(\cB) = 44$, so
$32 \leq \rk_\cO(E) < 44$. By
considering an outer automorphism of
$G$, we see that $E_2^X$ and
$E_3^X$ have the same multiplicity as
direct summands of $E$. The constraints
we have obtained on the $\cO$-ranks
imply that $E \cong E_2^X \oplus E_3^X$.
Therefore, $\rk_\cO(\Dia(X_1)) = 12$.

We have $\cB \cong E_1 \oplus 2 E_2
\oplus 2 E_3$ as $\cO G$-modules, so
$$\cB \cong \Dia(1_1) \oplus
  2 \Dia(1_2) \oplus 2 \Dia(1_3)$$
as $\cO(G {\times} 1)$-modules. By
the above isomorphism for $E$, we have
$${}_{G \times 1} \Res {}_{G \times X}
  (\Dia(X_1)) \cong \Dia(1_1) \; , \dozspace
  {}_{G \times 1} \Res {}_{G \times X} (E)
  \cong 2 \Dia(1_2) \oplus 2 \Dia(1_3) \; .$$
We have $\Dia(D_1) \cong \cO G 1_B$
as $\cO(G {\times} D)$-modules. But
$B = \cB$, so $\Dia(D_1) \cong \cB$ as
$\cO(G {\times} D)$-modules and
$${}_{G \times X} \Res {}_{G \times D}
  (\Dia(D_1)) \cong \Dia(X_1)
  \oplus E \; .$$
Again bearing in mind the $G$-conjugacy
of $X$, $Y$, $Z$, an application of
Lemma \ref{5.4} yields the conclusion
that $\cL \cP$ is as depicted in the
right-hand diagram above.

Our analysis of the three cases is now
complete. It follows, in particular, that
whenever the defect group of a $2$-block
is $V_4$, the underlying multiposet of
the stable part $\oo{\cL \cP}$ of the
pointed fusion system is such that all the
$\oo{\cL \cP}$-multiplicities are unity
and, as a poset, $\oo{\cL \cP}$ has the
following Hasse diagram.

\hspace{0.7in}
\begin{picture}(300,43)

\put(120, 7){\circle*{4}}
\put(150, 7){\circle*{4}}
\put(180, 7){\circle*{4}}
\put(150, 37){\circle*{4}}

\curve(120,7,150,37)
\curve(150,7,150,37)
\curve(180,7,150,37)

\end{picture}


\begin{thebibliography}{EMG}

\bibitem[1]{Bar}
L.\ Barker, {\it The pointed groups on a block
algebra}, (preprint).

\bibitem[2]{BG22}
L.\ Barker, M.\ Gelvin, {\it Conjectural
invariance with respect to the fusion system
of an almost-source algebra}, J.\ Group
Theory {\bf 25}, 973-995 (2022).

\bibitem[3]{Bro85}
M.\ Brou\'{e}, {\it On Scott modules and
$p$-permutation modules: an approach through
the Brauer morphism}, Proc.\  American Math.\
Soc.\ {\bf 93}, 401-408 (1985).

\bibitem[4]{CEKL11}
D.\ A.\ Craven, C.\ W.\ Eaton, R.\ Kessar,
M.\ Linckelmann, {\it The structure of blocks with
Klein four defect group}, Math.\ Zeit.\ {\bf 208},
441-476 (2011).

\bibitem[5]{EKKS14}
C.\ W.\ Eaton, R.\ Kessar, B.\ K\"{u}lshammer,
B.\ Sambale, {\it $2$-Blocks with abelian defect
groups}, Advances in Mathematics {\bf 254},
706-735 (2014).

\bibitem[6]{Lin18}
M.\ Linckelmann, ``The Block Theory of Finite Group
Algebras'', Vols. 1, 2 (Cambridge University Press,
Cambridge, 2018).

\bibitem[7]{Pui86}
L.\ Puig, {\it Local fusions in block source algebras},
J.\ Algebra {\bf 104}, 358-369 (1986).

\bibitem[8]{The95}
J.\ Th\'{e}venaz, ``$G$-Algebras and Modular
Representation Theory'', (Clarendon Press,
Oxford, 1995).

\end{thebibliography}
\end{document}